\documentclass[11pt]{amsart}
\usepackage{amssymb,mathrsfs,graphicx,enumerate}
\usepackage{amsmath,amsfonts,amssymb,amscd,amsthm,bbm}
\usepackage{graphicx,colortbl}
\usepackage{mathtools}

\DeclareMathOperator*{\essinf}{essinf}

\topmargin-0.1in \textwidth6.5in \textheight8.5in \oddsidemargin0in
\evensidemargin0in

\title[A discrete consensus optimization algorithm]{Convergence and error estimates for time-discrete consensus-based optimization algorithms}

\author[Ha]{Seung-Yeal Ha}
\address[Seung-Yeal Ha]{\newline Department of Mathematical Sciences and Research Institute of Mathematics, \newline
Seoul National University, Seoul, 08826 and \newline
Korea Institute for Advanced Study, Hoegiro 85, Seoul, 02455, Korea (Republic of)}
\email{syha@snu.ac.kr}

\author[Jin]{Shi Jin}
\address[Shi Jin]{\newline School of Mathematical Sciences, MOE-LSC, and Institute of Natural Sciences, \newline Shanghai Jiao Tong University, Shanghai 200240, China}
\email{shijin-m@sjtu.edu.cn}

\author[Kim]{Doheon Kim}
\address[Doheon Kim]{\newline School of Mathematics, Korea Institute for Advanced Study,\newline Hoegiro 85, Seoul 02455, Korea (Republic of)}
\email{doheonkim@kias.re.kr}

\newtheorem{theorem}{Theorem}[section]
\newtheorem{lemma}{Lemma}[section]
\newtheorem{corollary}{Corollary}[section]
\newtheorem{proposition}{Proposition}[section]
\newtheorem{remark}{Remark}[section]

\newtheorem{definition}{Definition}[section]

\newcommand{\bbr}{\mathbb R}

\newcommand{\bbe}{\mathbb E}

\newcommand{\bbp} {\mathbb P}

\begin{document}
\date{\today}

\subjclass[2010]{37H10, 37N40, 90C26.} 
\keywords{Consensus-based optimization,  Gibb's distribution, global optimization, machine learning, objective function}

\thanks{\textbf{Acknowledgment.} The work of S.-Y. Ha was supported by National Research Foundation of Korea (NRF-2020R1A2C3A01003881), and the work of S. Jin was supported by NSFC Grant Nos. 11871297 and  3157107, and the work of D. Kim was supported by a KIAS Individual Grant (MG073901) at Korea Institute for Advanced Study. The authors would like to thank to Dr. Dongnam Ko for his helpful comments on the Laplace principle}

\begin{abstract}
We present convergence and error estimates of the time-discrete consensus-based optimization(CBO) algorithms proposed in \cite{C-J-L-Z} for general nonconvex functions. In authors' recent work \cite{H-J-K}, rigorous error analysis of the first-order consensus-based optimization algorithm proposed in \cite{C-J-L-Z} was studied at the particle level without resorting to the kinetic equation via a mean-field limit. However, the error analysis for the corresponding time- discrete algorithm was not done mainly due to lack of discrete analogue of It\^o's stochastic calculus. In this paper, we provide a simple and elementary convergence and error analysis for a general time-discrete consensus-based optimization algorithm, which  includes the three  discrete algorithms in \cite{C-J-L-Z}. Our analysis provides numerical stability and convergence conditions for the three algorithms, as well as
error estimates to the global minimum. 
\end{abstract}
\maketitle \centerline{\date}


\section{Introduction} \label{sec:1}
\setcounter{equation}{0}
The purpose of this work is to complete the convergence and error analysis for discrete consensus-based optimization algorithms introduced in \cite{C-J-L-Z, H-J-K}.  For modern  machine learning methods, one needs to solve
non-convex optimiation problems in high dimensions. It is well-known that non-convex optimiation problem is NP-hard. Usually a deterministic algorithm, such as the gradient descent method, will get stuck to local
minima. In order to escape from  local minima, or saddle points, one needs to introduce some numerical noises which allow the algorithms to escape  from the local minima or saddle points.
For this reason, stochastic optimizations--such as the stochastic gradient descent method, have been widely
used in machine learning \cite{Be}.  On the other hand, gradient-free algorithms, which do not need the gradient of objective functions, are attractive for problems with non-smooth objective functions
or data-based optimization problems.  Meta-heuristic stochastic optimization algorithms belong to the latter category, for example  swarm intelligence methods \cite{Ke, Y-D} such as particle swarm optimization (in short PSO) \cite{E-K}, simulated annealing method \cite{K-G-V, L-A}, ant-colony algorithm \cite{Ya}, genetic algorithm \cite{Ho}  etc. 
A basic idea of these meta-heuristic algorithms is to use collective behaviors of underlying individual agents (or particles)  coupled with suitable stochastic components in the choice of system parameters. 
Although each individual moves in some random fashion, one designs suitable communication functions between the particles such that colelctively they exhibit some intelligent behavior, such as moving toward the
global minimum. Although these algorithms  are usually simple to implement and yields  reasonably good results with suitable choices of parameters, their rigorous convergence analysis were mostly open as far as the authors know. 

Our main interest in this paper lies on the first-order consensus based optimization algorithm, first introduced in \cite{P-T-T-M} and studied in \cite{C-C-T-T}, and then modified in  \cite{C-J-L-Z} (see also \cite{T-P-B-S} for comparisons between CBO algorithm and other heuristic algorithms based on collective dynamics). These are
swarming intellience models that can be proved to exhibit concensus approximating the global minimum of  general nonconvex functions under suitable conditions on the parameters and initial data.   We also refer to recent works on the consensus-based optimization algorithm on the sphere \cite{F-H-P-S1, F-H-P-S2}.  

We begin with the continuous optimization algorithm. Let $X_t^k = (x_t^{k,1}, \cdots, x_t^{k,d}) \in \bbr^d$ be the coordinate  of the $k$-th particle at time $t$, and $L = L(X),~X \in \bbr^d$ be a {\it non-convex} objective function to be minimized. Then, the main goal of optimization algorithms is to look for a global minimizer $X^*$ of $L$ in the search space (in our setting, the whole space) if it exists:
\[  \displaystyle  X^* \in \mbox{argmin}_{X \in \bbr^d} L(X). \]
 In a recent work \cite{C-J-L-Z}, the authors proposed the following variant of the CBO algorithm introduced in \cite{C-C-T-T, P-T-T-M}: 
\begin{equation} \label{A-1}
\begin{cases}
\displaystyle dX^i_t = -\lambda (X^i_t - {\bar X}_t^*)dt  + \sigma  \sum_{l=1}^{d} (x^{i,l}_t - {\bar x}_t^{*,l}) dW_t^l e_l, \quad t > 0,~~ i = 1, \cdots, N, \\
\displaystyle {\bar X}_t^* =(x_t^{*,1}, \cdots, x_t^{*,d})  := \frac{\sum_{j=1}^{N} X^j_t e^{-\beta L(X^j_t)}}{\sum_{j=1}^{N} e^{-\beta L(X^j_t)}},
\end{cases}
\end{equation}
where $\lambda$ and $\sigma$ denote the drift rate and noise intensity, respectively, and  $\beta > 0$ is a positive constant corresponding  
to the reciprocal of temperature in statistical physics. Here $\{e_l \}$ is the standard orthonormal basis in $\bbr^d$. The one-dimensional Brownian motions $W_t^l$ are assumed to be i.i.d. and satisfy the mean zero and covariance relations:
\[ \bbe[W_t^l]= 0 \quad \mbox{for $l = 1, \cdots, d$} \quad  \mbox{and} \quad  \bbe[W_t^{l_1} W_t^{l_2}] = \delta_{l_1 l_2} t, \quad 1 \leq  l_1, l_2 \leq d. \]
This model is an example of agent-based swarming models which have been studied intensively  in recent years, see for example several survey articles \cite{A-B, A-B-F, C-H-L, P-R-K, V-Z} and related literature \cite{C-S, F-H-J, H-L, H-L-L,K-C-B-F-L, Ku1, Ku2, M-T, Pe}. \newline

Next, we consider time-discrete analogue of \eqref{A-1}. For this, we set 
\[ h := \Delta t, \quad X_n := X(nh), \quad n = 0, 1, \cdots, \cdots.\]
Then the discrete scheme reads as follows:
\begin{equation}  \label{A-2}
\begin{cases}
\displaystyle X^i_{n+1} =X^i_n -\gamma  (X^i_n - {\bar X}_n^*)  -\sum_{l=1}^{d} (x^{i,l}_n - {\bar x}_n^{*,l})  \eta_{n}^l e_l,\quad n\geq 0,~~ i = 1, \cdots, N, \\
\displaystyle {\bar X}_n^* =(x_n^{*,1}, \cdots, x_n^{*,d})  := \frac{\sum_{j=1}^{N} X^j_n e^{-\beta L(X^j_n)}}{\sum_{j=1}^{N} e^{-\beta L(X^j_n)}},
\end{cases}
\end{equation}
where the random variables $\{ \eta^l_n \}_{n, l}$ are i.i.d. with
\begin{equation} \label{A-3}
\mathbb E[\eta_n^l]=0, \quad \mathbb E[|\eta_n^l|^2]=\zeta^2, \quad n = 1, \cdots, \quad l = 1, \cdots, d.
\end{equation}
Note that the discrete process  $\{ \eta^l_n \}_{n, l}$ certainly includes the Gaussian noise process. In this sense, the discrete model \eqref{A-2} clearly generalizes the discrete model studied in \cite{H-J-K}.  \newline

In this paper, we are interested in the following issues for the discrete algorithm \eqref{A-2}: \newline
\begin{itemize}
\item
(Question A):~Does the $N$-state ensemble $\{ X_n^i \}$ exhibit a global consensus? i.e., does
\[   X_n^i - X_n^j  \to  0\quad\mbox{as}\quad n \to \infty, \quad i, j = 1, \cdots, N \quad \mbox{in suitable sense}? \]

\vspace{0.2cm}

\item
(Question B):~If the answer to the first problem is positive, then under what conditions on system parameters and initial data, does there exist a global consensus state $X_\infty$ such that 
\[ X_n^i \to X_\infty \quad \mbox{for all }i, \quad \mbox{as} ~~n \to \infty,  \quad  \mbox{such that}~~ L(X_\infty) \sim \min_{X} L(X).  \]
\end{itemize}
In \cite{H-J-K}, the above two questions were answered positively for the continuous algorithm \eqref{A-1}, whereas for the discrete algorithm \eqref{A-2}, only the first question was discussed in the same paper. 
For a rigorous error analysis to the continuous algorithm, It\^o's calculus was essentially used. The main reason that we did not cover the second question for the discrete algorithm is mainly due to the lack of the discrete analogue of It\^o's stochastic calculus. 
  \newline

In this paper, we revisit the second question on the convergence of the discrete algorithm \eqref{A-2} and provide positive answer for the generalized discrete algorithm \eqref{A-2} which can cover several discrete algorithms proposed in \cite{C-J-L-Z}  (see Section \ref{sec:2.1} for details).
Our analysis also provides numerical stability conditions and error estimates or these algorithms \newline

Next, we summarize our main results as follows. First, we provide several stochastic global consensus results in suitable sense. More precisely, if system parameters $\gamma$ and $\zeta$ in \eqref{A-2} and \eqref{A-3} satisfy 
\begin{equation} \label{NN-1}
|1-\gamma| < 1, \quad 0\leq \zeta\leq\infty , 
\end{equation}
then the expectation of $X_n^i - X_n^j$ tends to zero asymptotically:
\[ \lim_{n \to \infty} \bbe[X_n^i - X_n^j] = 0, \quad \forall~i, j = 1, \cdots, N. \]
On the other hand, if system parameters satisfy a more restricted condition compared to \eqref{NN-1}:
\begin{equation} \label{A-3-1}
 (1-\gamma)^2 + \zeta^2 < 1,
 \end{equation}
then  $L^2$-global consensus and almost-sure global consensus occur asymptotically:
\[ \lim_{n \to \infty} \bbe |X_n^i - X_n^j |^2 = 0 \quad \mbox{and} \quad \bbp \Big\{  \lim_{n \to \infty} |X_n^i - X_n^j| = 0 \Big \} = 1, \quad   \forall~i, j = 1, \cdots, N,  \]
where $|\cdot|$ stands for the $L^2$-norm of a vector.

Moreover, the above $L^2$-global consensus also implies the $L^1$-global consensus. We refer to Definition \ref{D2.1} and Theorem \ref{T2.1} for the definition of convergences and details.   

Note that the above global consensus does not imply the existence of asymptotic consensus state $X_\infty$ independent of $i$ such that 
\begin{equation} \label{A-4}
 X_n^i \to X_\infty \quad \mbox{in suitable sense}, 
 \end{equation}
i.e., the process can fluctuate, but the relative distances tends to zero asymptotically, for example, the sample paths may tend to periodic orbit or limit cycle. Our second result provides  the condition under which  the process tends to a common fixed random variable $X_\infty$  (see \eqref{A-4}). In fact, under the same assumption \eqref{A-3-1}, one can show that there exists a common constant state $X_\infty=(x_\infty^1,\cdots,x_\infty^d)$ such that
\[  \lim\limits_{n\to\infty}  X_n^i=X_\infty \quad \mbox{a.s.},~1\leq i\leq N. \]
(see Theorem \ref{T3.1} for details). 

Finally, our last result establishes  the condition under which the asymptotic state $X_\infty$ lies in a small neighborhood of a unique global minimum $X_m$ for a large $\beta$. More precisely, under the assumption \eqref{A-3-1} and for a well-prepared initial random variable $X_{in}$ such that $X_n^i \sim X_{in}$, we derive a key estimate (see Proposition \ref{P3.1}):
\[
\essinf_{\omega\in\Omega} L(X_{\infty})\leq-\frac{1}{\beta}\log \mathbb E e^{-\beta L(X_{in})}-\frac{1}{\beta}\log\varepsilon.
\]
Then, by Laplace's method (\cite{Hsu}), one can derive 
\[
\essinf\limits_{\omega\in\Omega} L(X_\infty)\leq L(X_*)+ \frac{d}{2}\frac{\log\beta}{\beta}+E(\beta),
\]
for some function $E(\beta)= O\left(\frac{1}{\beta} \right),~~\beta\gg 1$. We refer to Remark \ref{R3.1} for the intriguing relation between $\beta$ and admissible reference random variable $X_{in}$. \newline

The rest of this paper is organized as follows. In Section \ref{sec:2}, we provide reductions of previously  studied discrete algorithms to \eqref{A-2} - \eqref{A-3}, and then study our first set of main results on the global consensus. In Section \ref{sec:3}, we study the emergence of a global consensus state and provide error estimates toward the global minimum for \eqref{A-2}. Finally, Section \ref{sec:4} is devoted to a brief summary of our main results and some remaining issues to be explored in a future work. \newline

\noindent {\bf Notation}. For a random variable $Z \in \bbr$ on the probability space $(\Omega, {\mathcal F}, \bbp)$, we denote its mean by $\bbe Z$ or $\bbe[Z]$ interchangeably.

\section{Emergence of global consensus} \label{sec:2}
\setcounter{equation}{0}
In this section, we first show that several discrete algorithms introduced in \cite{C-J-L-Z, H-J-K} can be reduced to our generalized discrete algorithm \eqref{A-2} and then we provide a sufficient framework leading to the global consensus for the discrete optimization algorithm in terms of system parameters and initial data.

\subsection{Discrete algorithms} \label{sec:2.1} In this subsection, we show how the previous discrete algorithms studied can be reduced as special cases for our general discrete model. \newline

Let $X_n^k = (x_n^{k,1}, \cdots, x_n^{k,d}) \in \bbr^d$ be the position of the $k$-th particle at time $n$ $(1\leq k\leq N)$. Suppose that the function $L:\mathbb R^d\to\mathbb R$ has exactly one global minimum, and we consider the general discrete consensus-based optimization algorithm:
\begin{equation} \label{B-1}
\begin{cases}
\displaystyle X^i_{n+1} =X^i_n -\gamma (X^i_n - {\bar X}_n^*)  -\sum_{l=1}^{d} (x^{i,l}_n - {\bar x}_n^{*,l}) \eta_n^l e_l,\quad n\geq 0, \quad i = 1, \cdots, N, \\
\displaystyle {\bar X}_n^* =(x_n^{*,1}, \cdots, x_n^{*,d})  := \frac{\sum_{j=1}^{N} X^j_n e^{-\beta L(X^j_n)}}{\sum_{j=1}^{N} e^{-\beta L(X^j_n)}},
\end{cases}
\end{equation}
where $\{e_l \}$ is the standard orthonormal basis in $\bbr^d$ and random variables $\{ \eta^l_n \}_{n, l}$ are i.i.d with
\begin{equation} \label{B-2}
\mathbb E[\eta_n^l]=0 \quad \mbox{and} \quad \mathbb E|\eta_n^l|^2=\zeta^2.
\end{equation}
In the sequel, we consider the following three discrete algorithms. \newline

\noindent $\bullet$~Model A:  Consider the first-order Euler type discrete model in \cite{H-J-K}:
\begin{equation*} \label{B-3}
X^i_{n+1} =X_n^i -\lambda h (X^i_n - {\bar X}_n^*)  -  \sum_{l=1}^{d} (x^{i,l}_n - {\bar x}_n^{*,l}) \sigma \sqrt{h}Z_n^l e_l, \quad n\geq 0,~~ i = 1, \cdots, N,
\end{equation*}
where the random variables $\{ Z^l_n \}_{n, l}$ are i.i.d  standard normal distributions, i.e. $Z^l_n~\sim~{\mathcal N}(0, 1^2)$. If we set 
\begin{equation} \label{B-3-1}
\gamma := \lambda h \quad \mbox{and} \quad \eta_n^l := \sigma \sqrt{h}Z_n^l. 
\end{equation}
Then, the above setting clearly satisfies the relations \eqref{B-2} with $\zeta = \sigma \sqrt{h}$.

\vspace{0.5cm}

\noindent $\bullet$~Model B: Consider a predictor-corrector type discrete model in \cite{C-J-L-Z}.
\begin{equation}
\begin{cases} \label{B-4}
\displaystyle \hat X^i_{n} = {\bar X}_n^*+ e^{-\lambda h} (X^i_n - {\bar X}_n^*),\\
\displaystyle X^i_{n+1} =\hat X_n^i  -  \sum_{l=1}^{d} (\hat x^{i,l}_n - {\bar x}_n^{*,l}) \sigma \sqrt{h}Z_n^l e_l, \quad n\geq 0,~~ i = 1, \cdots, N.
\end{cases}
\end{equation}
We substitute $\eqref{B-4}_1$ into $\eqref{B-4}_2$ and use an addition-subtraction trick to see that 
\begin{equation*} \label{B-5}
X^i_{n+1} =X_n^i -(1-e^{-\lambda h}) (X^i_n - {\bar X}_n^*)  -  \sum_{l=1}^{d} (x^{i,l}_n - {\bar x}_n^{*,l}) e^{-\lambda h}\sigma \sqrt{h}Z_n^l e_l, \quad n\geq 0,~~ i = 1, \cdots, N.
\end{equation*}
If  we set 
\begin{equation} \label{B-5-1}
\gamma := 1-e^{-\lambda h} \quad \mbox{and} \quad \eta_n^l := e^{-\lambda h}\sigma \sqrt{h}Z_n^l,
 \end{equation}
then \eqref{B-4} reduces to the special case of \eqref{B-1} - \eqref{B-2} with $\zeta = e^{-\lambda h}\sigma \sqrt{h}$.

\vspace{0.5cm}

\noindent $\bullet$~Model C:  Consider one of discrete optimization model proposed in \cite{C-J-L-Z}:
\begin{equation}\label{B-6}
X^i_{n+1} ={\bar X}_n^*+  \sum_{l=1}^{d} (x^{i,l}_n - {\bar x}_n^{*,l})  \left[\exp\left(-\left(\lambda+\frac{1}{2}\sigma^2\right)h+\sigma\sqrt{h}Z_n^l \right)\right]e_l, \quad n\geq 0,~~ i = 1, \cdots, N,
\end{equation}
Again, the R.H.S. of \eqref{B-6} can be rewritten as 
\begin{equation*} \label{B-7}
X^i_{n+1} =X_n^i -(1-e^{-\lambda h}) (X^i_n - {\bar X}_n^*)  -  \sum_{l=1}^{d} (x^{i,l}_n - {\bar x}_n^{*,l})  e^{-\lambda h}\left[\exp\left(-\frac{1}{2}\sigma^2h+\sigma\sqrt{h}Z_n^l \right)-1\right]e_l.
\end{equation*}
We set 
\begin{equation} \label{B-8}
\gamma := 1-e^{-\lambda h} \quad \mbox{and} \quad \eta_n^l := e^{-\lambda h}\left[\exp\left(-\frac{1}{2}\sigma^2h+\sigma\sqrt{h}Z_n^l \right)-1\right].
\end{equation}
Then, we use the elementary facts \cite{C-S2}:
\[
X\sim\mbox{Lognormal}(\alpha,\beta^2)\quad\Rightarrow\quad \mathbb EX=e^{\alpha+\frac{\beta^2}{2}}\quad \mbox{and}\quad\mathbb EX^2=e^{2\alpha+2\beta^2}
\]
to see that \eqref{B-8}  satisfies moment relations \eqref{B-2} with $\zeta = e^{-\lambda h} \sqrt{e^{\sigma^2h}-1}$. 
\subsection{Global consensus} \label{sec:2.2} In this subsection, we show that the global consensus for \eqref{B-1} occurs asymptotically for all initial data under suitable conditions on $\gamma$ and $\zeta$. First, we recall the concepts of $L^p$ and almost sure global consensus in the following definition.
\begin{definition} \label{D2.1}
Let ${\mathcal X} = \{ {\mathcal X}_n := (x_n^1, \cdots, x_n^d) \}$ be a stochastic process, and let $(\Omega, {\mathcal F}, \bbp)$ be the underlying probability space.
\begin{enumerate}
\item
The configuration process ${\mathcal X}$ exhibits a global consensus in $L^p$ with $p \geq 1$, if the following zero $L^p$-convergence holds:
\[   \lim_{n \to +\infty} \bbe |X_n^i - X_n^j|^p = 0, \quad \forall~i, j = 1, \cdots, N.     \]
\item
The configuration process ${\mathcal X}$ exhibits a global consensus almost surely if for almost sure $\omega \in \Omega$ and $i, j = 1, \cdots, N$, the sample path $X_n^i(\omega) - X_n^j(\omega)$ tends to zero asymptotically.
\[   \bbp \Big \{ \lim_{n \to \infty}  |X_n^i - X_n^j | = 0 \Big \} = 1, \quad \forall~i, j = 1, \cdots, N.  \]
\end{enumerate}
\end{definition}
\begin{remark}
Recall that $L^2$-convergence implies $L^1$ convergence on a probability space.
\end{remark}
Now, we consider the relation for the process $X_n^i - X_n^j$. For this, we consider the discrete algorithms: for $i, j = 1, \cdots, N$, 
\begin{equation} \label{B-9}
\begin{cases}
\displaystyle X^i_{n+1} =X^i_n -\gamma(X^i_n - {\bar X}_n^*)  -\sum_{l=1}^{d} (x^{i,l}_n - {\bar x}_n^{*,l}) \eta_n^l e_l,  \\
\displaystyle X^j_{n+1} =X^j_n -\gamma(X^j_n - {\bar X}_n^*)  -\sum_{l=1}^{d} (x^{j,l}_n - {\bar x}_n^{*,l}) \eta_n^l e_l,
\end{cases}
\end{equation}
Then, it follows from \eqref{B-9} that $x^{i}_{n +1}  - x^{j}_{n +1}$ satisfies 
\begin{equation} \label{B-10}
x^{i,l}_{n +1}  - x^{j,l}_{n +1} = (1-\gamma - \eta^l_n  ) (x^{i,l}_n  - x_n^{j,l}). 
\end{equation}
In the following lemma, we provide several estimates for $x^{i,l}_{n}  - x^{j,l}_{n}$.  
\begin{lemma} \label{L2.1}
Let $\{{\mathcal X}_n \}$ be a solution process to \eqref{B-1}. Then the following estimates hold.
\begin{eqnarray*}
&& (i)~\mathbb E[X^{i}_n - X^{j}_n] =(1-\gamma)^n \mathbb E[X^{i}_0 - X^{j}_0]. \cr
&& (ii)~ \mathbb E|X^{i}_n - X^{j}_n|^2 =\big((1-\gamma)^2+\zeta^2\big)^n  \mathbb E|X^{i}_0 - X^{j}_0|^2.   \cr
&& (iii)~|x^{i,l}_n - x^{j,l}_n |^2\leq |x^{i,l}_0 - x^{j,l}_0|^2 e^{-n Y_n^l(\omega) }, \quad \mbox{a.s.}~\omega \in \Omega,  
\end{eqnarray*}
where $Y_n^l$ is a random variable satisfying 
\[ \lim_{n \to \infty} Y_n^l(\omega) =2\gamma-\gamma^2-\zeta^2 , \quad \mbox{a.s.}~\omega \in \Omega,\quad l=1,\cdots,d. \]
\end{lemma}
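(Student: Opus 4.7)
The starting point is the key componentwise recursion \eqref{B-10}, which decouples the difference dynamics and exhibits it as a multiplicative random walk with i.i.d.\ multipliers $1-\gamma-\eta^l_n$. Iterating once gives the closed product formula
\[
x^{i,l}_n - x^{j,l}_n \;=\; (x^{i,l}_0 - x^{j,l}_0)\prod_{k=0}^{n-1}(1-\gamma-\eta^l_k),
\]
from which (i)--(iii) will all be deduced. Throughout, the critical structural fact is that $x^{i,l}_n - x^{j,l}_n$ is measurable with respect to the initial data and $\{\eta^l_0,\dots,\eta^l_{n-1}\}$, hence independent of $\eta^l_n$.

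For (i), I would take expectations of \eqref{B-10} componentwise, factor using the above independence and $\mathbb{E}[\eta^l_n]=0$ from \eqref{B-2}, obtain $\mathbb{E}[x^{i,l}_{n+1}-x^{j,l}_{n+1}]=(1-\gamma)\,\mathbb{E}[x^{i,l}_n-x^{j,l}_n]$, iterate, and reassemble the vector. For (ii), square \eqref{B-10}, take expectations, and use independence together with $\mathbb{E}[(1-\gamma-\eta^l_n)^2]=(1-\gamma)^2+\zeta^2$ (which follows by expanding and applying \eqref{B-2}) to get $\mathbb{E}|x^{i,l}_{n+1}-x^{j,l}_{n+1}|^2 = ((1-\gamma)^2+\zeta^2)\,\mathbb{E}|x^{i,l}_n-x^{j,l}_n|^2$; summing over $l$ via $|X^i_n-X^j_n|^2=\sum_l|x^{i,l}_n-x^{j,l}_n|^2$ and iterating closes (ii).

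For (iii), squaring the product formula gives the exact identity
\[
|x^{i,l}_n - x^{j,l}_n|^2 \;=\; |x^{i,l}_0 - x^{j,l}_0|^2 \prod_{k=0}^{n-1}(1-\gamma-\eta^l_k)^2.
\]
The temptingly ``sharp'' choice $\tilde Y_n^l = -\tfrac1n\sum_k\log(1-\gamma-\eta^l_k)^2$ is awkward: the summands may be $-\infty$ on the event $\{\eta^l_k=1-\gamma\}$, and integrability of $\log|1-\gamma-\eta^l_0|$ is not guaranteed by the hypotheses \eqref{B-2}. The clean workaround is the elementary inequality $\log u \leq u-1$ for $u>0$, applied to $u=(1-\gamma-\eta^l_k)^2$, which yields
\[
\prod_{k=0}^{n-1}(1-\gamma-\eta^l_k)^2 \;\leq\; \exp\!\Bigl(\sum_{k=0}^{n-1}\bigl[(1-\gamma-\eta^l_k)^2 - 1\bigr]\Bigr) \;=\; e^{-nY_n^l},
\]
with the tidy definition $Y_n^l := \tfrac1n\sum_{k=0}^{n-1}\bigl[1-(1-\gamma-\eta^l_k)^2\bigr]$. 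The summands are i.i.d., $L^1$ by \eqref{B-2}, with mean $1-((1-\gamma)^2+\zeta^2)=2\gamma-\gamma^2-\zeta^2$, so Kolmogorov's strong law of large numbers delivers the claimed a.s.\ limit. The only real subtlety in the entire lemma is this choice of $Y_n^l$: the logarithmic version would force integrability gymnastics, whereas the $\log u \leq u-1$ trick simultaneously secures the pathwise inequality and an i.i.d.\ average whose SLLN limit matches $2\gamma-\gamma^2-\zeta^2$ on the nose.
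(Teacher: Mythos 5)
Your proof is correct and follows essentially the same route as the paper: the iterated product formula from \eqref{B-10}, independence for (i) and (ii), and for (iii) the inequality $\log u\leq u-1$ (equivalently $e^{x}\geq 1+x$) applied to $(1-\gamma-\eta^l_k)^2$, yielding exactly the paper's $Y_n^l=\tfrac1n\sum_k\bigl[1-(1-\gamma-\eta^l_k)^2\bigr]=\tfrac1n\sum_k(\gamma+\eta^l_k)(2-\gamma-\eta^l_k)$ and the strong law of large numbers. Your aside explaining why the exact logarithmic choice of $Y_n^l$ is avoided (possible non-integrability of $\log|1-\gamma-\eta^l_0|$) is a sensible justification of a choice the paper makes silently.
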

\begin{proof} The  estimates below for a special case (Model A) can be found in Theorem 3.4 in \cite{H-J-K} and proofs are almost similar. However, for self-containedness of this paper, we present their proofs. \newline

\noindent (i)~It follows from the recursive relation \eqref{B-10} that 
\begin{equation} \label{B-11}
x^{i,l}_n - x^{j,l}_n = (x^{i,l}_0 - x^{j,l}_0)\prod_{m = 0}^{n-1} (1-\gamma - \eta^l_m  ) ,\qquad l=1,\cdots,d.
\end{equation}
Now we take expectation on both sides of \eqref{B-11} using the independence of $\eta_m^l$ and $x^{i,l}_0 - x^{j,l}_0$ to get
\[ \mathbb E[x^{i,l}_n - x^{j,l}_n] =(1-\gamma)^n  \mathbb E[ x^{i,l}_0 - x^{j,l}_0 ], \quad \mbox{i.e.,} \quad \mathbb E[X^{i}_n - X^{j}_n]  =(1-\gamma)^n \mathbb E[X^{i}_0 - X^{j}_0].
\]

\vspace{0.5cm}

\noindent (ii)~We take the square of \eqref{B-11} to see
\begin{equation} \label{B-12}
|x^{i,l}_n - x^{j,l}_n |^2= |x^{i,l}_0 - x^{j,l}_0|^2\prod_{m = 0}^{n-1} |1-\gamma - \eta^l_m  |^2 ,\qquad l=1,\cdots,d.
\end{equation}
Then, \eqref{A-3}, \eqref{B-12}, independence of $\eta_m^l$ and $x^{i,l}_0 - x^{j,l}_0$ yield
\[ \mathbb E|x^{i,l}_n - x^{j,l}_n|^2 =\big((1-\gamma)^2+\zeta^2\big)^n  \mathbb E|x^{i,l}_0 - x^{j,l}_0|^2, \]
i.e.,
\[ \mathbb E|X^{i}_n - X^{j}_n|^2 =\big((1-\gamma)^2+\zeta^2\big)^n  \mathbb E|X^{i}_0 - X^{j}_0|^2. \]


\vspace{0.5cm}

\noindent (iii)~It follows from the inequality $e^x \geq 1 + x,~~ x \in \bbr$ that
\begin{equation} \label{B-13}
(1 -\gamma - \eta^l_m)^2 \leq e^{(1 -\gamma - \eta^l_m)^2-1} =e^{-(\gamma + \eta^l_m)(2 -\gamma - \eta^l_m)}.
\end{equation}
Then, we use \eqref{B-12} and \eqref{B-13} to obtain
\begin{align*}
\begin{aligned} \label{B-14}
|x^{i,l}_n - x^{j,l}_n |^2&\leq |x^{i,l}_0 - x^{j,l}_0|^2\prod_{m = 0}^{n-1}e^{-(\gamma + \eta^l_m)(2 -\gamma - \eta^l_m)} \\
&= |x^{i,l}_0 - x^{j,l}_0|^2\exp \Big[ -n \times \frac{1}{n}\sum_{m = 0}^{n-1} (\gamma + \eta^l_m)(2 -\gamma - \eta^l_m) \Big] .
\end{aligned}
\end{align*}
We set 
\[  Y_n := \frac{1}{n}\sum_{m = 0}^{n-1} (\gamma + \eta^l_m)(2 -\gamma - \eta^l_m). \]
Then, we  use the strong law of large numbers to see
\[  Y_n \quad  \longrightarrow \quad {\mathbb E} \Big[  (\gamma + \eta^l_m)(2 -\gamma - \eta^l_m)  \Big] =2\gamma-\gamma^2-\zeta^2 \quad \mbox{a.s.~~as 
$n \to \infty$.}  \]
\end{proof}
As a direct application of Lemma \ref{L2.1}, we have the following global consensus estimates.
\begin{theorem} \label{T2.1}
Let $\{{\mathcal X}_n \}$ be a solution process to \eqref{B-1}.  Then, the following three global consensus results hold.
\begin{enumerate}
\item
Suppose that system parameters satisfy 
\[ |\gamma - 1| < 1 \quad \mbox{and} \quad 0\leq \zeta\leq \infty . \]
Then, $ \mathbb E[X^{i}_n - X^{j}_n]$ tends to zero asymptotically:
\[ \lim_{ n \to \infty} \mathbb E[X^{i}_n - X^{j}_n]  = 0, \quad \forall~i, j = 1, \cdots, N. \]
\item 
Suppose that system parameters $\gamma$ and $\zeta$ satisfy 
\[ (\gamma-1)^2 + \zeta^2 < 1    \]
then, $L^2$ and almost-sure global consensus emerge asymptotically: for a.s. $\omega \in \Omega$,
\[ \lim_{n \to \infty} \mathbb E|X^{i}_n - X^{j}_n|^2 = 0, \quad |x^{i,l}_n - x^{j,l}_n |^2\leq |x^{i,l}_0 - x^{j,l}_0|^2 e^{-n Y_n^l(\omega) }, \quad i, j = 1, \cdots, N,~~l = 1, \cdots, d,
\]
where $Y_n^l$ is a random variable satisfying 
\[ \lim_{n \to \infty} Y_n^l(\omega) = 1 - (\gamma - 1)^2 - \zeta^2 > 0, \quad \mbox{a.s.}~\omega \in \Omega,\quad l=1,\cdots,d. \]
\end{enumerate}
\end{theorem}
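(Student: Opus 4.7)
The theorem follows essentially directly from Lemma \ref{L2.1}; the plan is simply to combine each of its three estimates with the corresponding hypothesis on $\gamma$ and $\zeta$. No new probabilistic ingredient beyond what is already proved in Lemma \ref{L2.1} should be required.

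For item (1), I would apply Lemma \ref{L2.1}(i) to write $\mathbb{E}[X_n^i - X_n^j] = (1-\gamma)^n\,\mathbb{E}[X_0^i - X_0^j]$, and then simply observe that $|1-\gamma|<1$ forces $(1-\gamma)^n \to 0$ as $n\to\infty$. This yields the claimed vanishing of the mean difference for every pair $i,j$ and every initial configuration, regardless of $\zeta$.

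For the $L^2$ part of item (2), I would invoke Lemma \ref{L2.1}(ii), which gives $\mathbb{E}|X_n^i - X_n^j|^2 = \bigl((1-\gamma)^2 + \zeta^2\bigr)^n\,\mathbb{E}|X_0^i - X_0^j|^2$. Under the assumption $(\gamma-1)^2 + \zeta^2 < 1$, the geometric base is strictly less than one, so the $L^2$-distance decays exponentially to zero. (By the remark following Definition \ref{D2.1}, this also delivers $L^1$-consensus.)

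For the almost-sure estimate, I would use Lemma \ref{L2.1}(iii), rewriting the limit of $Y_n^l$ through the algebraic identity $2\gamma - \gamma^2 - \zeta^2 = 1-(\gamma-1)^2-\zeta^2$, so that $Y_n^l(\omega) \to 1-(\gamma-1)^2-\zeta^2 > 0$ almost surely under the hypothesis. The only mildly nontrivial step I foresee is to pass from $Y_n^l(\omega)\to c>0$ a.s.\ to $nY_n^l(\omega)\to\infty$ a.s., but this is straightforward: on the probability-one event of convergence one has $Y_n^l(\omega)>c/2$ for all large $n$, whence $nY_n^l(\omega)\ge nc/2\to\infty$. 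Combined with the pointwise bound from Lemma \ref{L2.1}(iii), this produces $|x_n^{i,l}-x_n^{j,l}|^2 \to 0$ almost surely, completing the almost-sure consensus. Since the heavy lifting (the explicit product representation in \eqref{B-11} and the strong law of large numbers) is already carried out in Lemma \ref{L2.1}, I do not expect any substantive obstacle beyond this bookkeeping.
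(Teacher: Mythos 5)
Your proposal is correct and matches the paper's approach: the paper itself states that Theorem \ref{T2.1} follows directly from Lemma \ref{L2.1} and omits the details, which are exactly the routine deductions you spell out (including the identity $2\gamma-\gamma^2-\zeta^2 = 1-(\gamma-1)^2-\zeta^2$ and the passage from $Y_n^l\to c>0$ a.s.\ to $nY_n^l\to\infty$ a.s.).
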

\begin{proof} Since the proofs follow from Lemma \ref{L2.1} directly, we omit them here. 
\end{proof}
As a corollary of Theorem \ref{T2.1}, one has the following global consensus for Model A - Model C discussed in previous subsection.  
\begin{corollary} \label{C2.1}
The following assertions hold.
\begin{enumerate}
\item
Suppose that system parameters satisfy
\begin{equation*}  \label{stab-A}
\lambda > \frac{\sigma^2}{2}, \quad  0 < h < \frac{2\lambda-\sigma^2}{\lambda^2},
\end{equation*}
then, Model A admits $L^2$ and almost sure global consensus. 

\vspace{0.2cm}

\item
Suppose that system parameters satisfy
\begin{equation*}  \label{stab-B}
(1+\sigma^2h)e^{-2\lambda h}<1,
\end{equation*}
then, Model B admits $L^2$ and almost sure global consensus. .

\vspace{0.2cm}

\item
Suppose that system parameters satisfy 
\begin{equation} \label{stab-C} 
\lambda> \frac{\sigma^2}{2}, 
\end{equation}
then, Model C admits $L^2$ and almost sure global consensus, for any $h>0$.
\end{enumerate}
\end{corollary}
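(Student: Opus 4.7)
My plan is to invoke Theorem 2.1(2) directly for each of the three models; since Section 2.1 has already identified, for each of Models A, B, and C, the precise values of the parameters $\gamma$ and $\zeta$ that appear in the generalized scheme \eqref{B-1}-\eqref{B-2}, it suffices to translate the condition $(\gamma-1)^2+\zeta^2<1$ from Theorem 2.1 into statements about $\lambda$, $\sigma$, $h$ in each case. So the proof is really three algebraic verifications.

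For Model A, I would substitute $\gamma=\lambda h$ and $\zeta=\sigma\sqrt h$ from \eqref{B-3-1}. The inequality $(\gamma-1)^2+\zeta^2<1$ then becomes $\lambda^2h^2-2\lambda h+\sigma^2h<0$, which after dividing by $h>0$ is $\lambda^2 h<2\lambda-\sigma^2$. The necessity of a positive right-hand side forces $\lambda>\sigma^2/2$, after which the bound rearranges to $h<(2\lambda-\sigma^2)/\lambda^2$, matching the hypothesis. For Model B, from \eqref{B-5-1} we have $\gamma-1=-e^{-\lambda h}$ and $\zeta=e^{-\lambda h}\sigma\sqrt h$, so the condition becomes $e^{-2\lambda h}+e^{-2\lambda h}\sigma^2h<1$, i.e.\ $(1+\sigma^2h)e^{-2\lambda h}<1$, exactly as stated.

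For Model C, from \eqref{B-8} together with the lognormal moment identities recalled in Section 2.1, we have $\gamma-1=-e^{-\lambda h}$ and $\zeta^2=e^{-2\lambda h}(e^{\sigma^2h}-1)$. The condition simplifies dramatically since
\begin{equation*}
(\gamma-1)^2+\zeta^2 = e^{-2\lambda h}+e^{-2\lambda h}(e^{\sigma^2 h}-1)=e^{(\sigma^2-2\lambda)h},
\end{equation*}
which is strictly less than $1$ for every $h>0$ precisely when $\sigma^2-2\lambda<0$, that is \eqref{stab-C}. In all three cases, once the hypothesis of Theorem 2.1(2) is verified, both the $L^2$ convergence $\mathbb E|X_n^i-X_n^j|^2\to0$ and the almost-sure bound on $|x_n^{i,l}-x_n^{j,l}|^2$ follow immediately.

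There is no real obstacle here, and I would not expect to have to introduce any new probabilistic machinery. The only mild subtlety is keeping track of the sign of $\gamma-1$ (since $\gamma<1$ in all three models under the stated conditions, so the written hypothesis $|\gamma-1|<1$ of Theorem 2.1(1) is automatic once $\gamma>0$), and recognizing in Model C the telescoping cancellation $e^{-2\lambda h}+e^{-2\lambda h}(e^{\sigma^2 h}-1)=e^{(\sigma^2-2\lambda)h}$ that makes the stability region independent of $h$. The whole argument should fit in a few lines and I would present it as three short bullet-style verifications.
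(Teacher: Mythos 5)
Your proposal is correct and is essentially the paper's own proof: both arguments verify the hypothesis of Theorem \ref{T2.1}(2) for each model by substituting the values of $\gamma$ and $\zeta$ identified in Section \ref{sec:2.1}, the only cosmetic difference being that you check $(\gamma-1)^2+\zeta^2<1$ directly while the paper checks the algebraically equivalent condition $2\gamma-\gamma^2-\zeta^2>0$. All three substitutions and simplifications, including the cancellation $e^{-2\lambda h}+e^{-2\lambda h}(e^{\sigma^2h}-1)=e^{(\sigma^2-2\lambda)h}$ for Model C, match the paper's computations.
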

\begin{proof}
(i) For Model A, we use relations $\gamma = \lambda h,~~\zeta = \sigma \sqrt{h}$ in  \eqref{B-3-1} to see
\[ 2\gamma-\gamma^2-\zeta^2 = 2\lambda h -(\lambda h)^2-\sigma^2 h>0,\quad\mbox{for}\quad 0<h<\frac{2\lambda-\sigma^2}{\lambda^2}. \]
Then, we use (3) in Theorem \ref{T2.1} to derive the desired estimate. \newline

\noindent (2)~For Model B, we use relations $\gamma =  1 - e^{-\lambda h},~~\zeta = \zeta = e^{-\lambda h}\sigma \sqrt{h}$ in \eqref{B-5-1} to see
\[
2\gamma-\gamma^2-\zeta^2 = 2(1-e^{-\lambda h}) -(1-e^{-\lambda h})^2-\sigma^2 he^{-2\lambda h}=1-(1+\sigma^2h)e^{-2\lambda h}>0.
\]
(3) For Model C, we use $\gamma = 1 - e^{-\lambda h},~~\zeta = e^{-\lambda h} \sqrt{e^{\sigma^2 h} - 1}$ in \eqref{B-8} to get
\[
2\gamma-\gamma^2-\zeta^2 = 2(1-e^{-\lambda h}) -(1-e^{-\lambda h})^2-(e^{\sigma^2h}-1) e^{-2\lambda h}=1-e^{(\sigma^2-2\lambda) h}>0\quad\Leftrightarrow\quad (2\lambda-\sigma^2)h>0.
\]
\end{proof}

\begin{remark} 
The above corollary provides the stability conditions for three algorithms. For Model C, the algorithm is {\it unconditionally stable} provided \eqref{stab-C} holds, namely one can choose arbitrary $h>0$.
Model B is also unconditionally stable provided $\lambda\geq\frac{\sigma^2}{2}$ is satisfied, because
\[ e^{2\lambda h}>1+ 2\lambda h \geq 1+ \sigma^2 h ,\quad\mbox{for}\quad h>0. \]
Model A is conditionally stable. 
\end{remark}

\section{Convergence analysis and error estimates}  \label{sec:3}
\setcounter{equation}{0}
In this section, we provide a convergence analysis with error estimates for the discrete CBO algorithm.  In previous section, we studied sufficient conditions leading to the global consensus which does not mean $X_n^i$ tends to a common fixed state $X_\infty$. Thus, two main issues to be covered in this section are two-fold.
\begin{itemize}
\item
(Q1):~What is a sufficient framework leading to the common asymptotic state:
\begin{equation*} \label{C-1}
 X_n^i (\beta) \to X_\infty(\beta), \quad \mbox{as $n \to \infty$~for all $i = 1, \cdots, N$?} 
\end{equation*} 
\item
(Q2):~If the above question is resolved, then how close is the asymptotic state $X_\infty$ to the global minimum $X_m$ of $L$ if the latter exists? 
\end{itemize}

\subsection{Emergence of common consensus state} \label{sec:3.1}
For the emergence of common consensus state, we first introduce an ensemble average:
\[
{\bar X}_n:=\frac{1}{N}\sum_{i=1}^N X_n^i = ({\bar x}_n^1, \cdots, {\bar x}_n^d).
\]
Before we present our second main result, we present two elementary lemmas to be crucially used in the proof of convergence analysis.
\begin{lemma}\label{L3.1}
Let $\{X_n^i\}_{1\leq i\leq N}$ be a solution to \eqref{B-1}. Then, the following estimates hold almost surely. 
\begin{eqnarray*}
&& (i)~|X_n^i-\bar X_n|^2=\sum_{l=1}^d (x_0^{i,l} -{\bar x}^l_0)^2\prod_{m=0}^{n-1}\left(1 - \gamma-\eta_m^l\right)^2.\\
&& (ii)~|\bar X_n-\bar X_n^*|^2\leq \max_{1\leq i\leq N}|X_n^i-\bar X_n|^2.\\
&& (iii)~  \frac{1}{N}\sum_{i=1}^d |X_n^i-\bar X_n^*|^2\leq 2 \sum_{l=1}^d \left(\max_{1\leq i\leq N} (x_0^{i,l} -{\bar x}^l_0)^2\right)\prod_{m=0}^{n-1}\left(1 - \gamma-\eta_m^l\right)^2.
\end{eqnarray*}
\end{lemma}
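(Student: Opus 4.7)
The plan is to derive (i) by reducing to the same one-step contraction used in Lemma~\ref{L2.1}, to derive (ii) by recognizing $\bar X_n^*$ as a convex combination and invoking Jensen's inequality, and to derive (iii) by exploiting the Pythagorean-type identity that follows from $\sum_i (X_n^i-\bar X_n)=0$.

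For (i), I would first average the update \eqref{B-1} over $i$ to obtain
\begin{equation*}
\bar X_{n+1}=\bar X_n-\gamma(\bar X_n-\bar X_n^*)-\sum_{l=1}^d (\bar x_n^l-\bar x_n^{*,l})\eta_n^l e_l.
\end{equation*}
Subtracting this from the equation for $X_{n+1}^i$ makes the $\bar X_n^*$-terms cancel componentwise, leaving the scalar recursion $x^{i,l}_{n+1}-\bar x^l_{n+1}=(1-\gamma-\eta_n^l)(x^{i,l}_n-\bar x^l_n)$, which is exactly of the form \eqref{B-10}. Iterating from $m=0$ to $n-1$, squaring, and summing over $l$ yields (i).

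For (ii), I would rewrite $\bar X_n^*=\sum_{j=1}^N w_n^j X_n^j$ with nonnegative weights $w_n^j=e^{-\beta L(X_n^j)}/\sum_k e^{-\beta L(X_n^k)}$ summing to $1$. Then $\bar X_n-\bar X_n^*=\sum_j w_n^j(\bar X_n-X_n^j)$, so Jensen's inequality applied to $|\cdot|^2$ gives $|\bar X_n-\bar X_n^*|^2\leq \sum_j w_n^j|X_n^j-\bar X_n|^2\leq \max_{1\leq j\leq N}|X_n^j-\bar X_n|^2$.

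For (iii), the key observation is the exact decomposition
\begin{equation*}
\sum_{i=1}^N|X_n^i-\bar X_n^*|^2=\sum_{i=1}^N|X_n^i-\bar X_n|^2+2\Bigl(\sum_{i=1}^N(X_n^i-\bar X_n)\Bigr)\cdot(\bar X_n-\bar X_n^*)+N|\bar X_n-\bar X_n^*|^2,
\end{equation*}
where the middle term vanishes by the definition of $\bar X_n$. Dividing by $N$ and applying (ii) to the last term bounds $\tfrac1N\sum_i|X_n^i-\bar X_n^*|^2$ by $2\max_i|X_n^i-\bar X_n|^2$. Plugging in the formula from (i) and using the elementary inequality $\max_i\sum_l a_{i,l}\leq \sum_l\max_i a_{i,l}$ for the nonnegative quantities $a_{i,l}=(x_0^{i,l}-\bar x_0^l)^2\prod_m(1-\gamma-\eta_m^l)^2$ (which is valid pathwise since $\prod_m(1-\gamma-\eta_m^l)^2$ depends only on $l$ and $m$, not on $i$) gives (iii). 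There is no real obstacle here; the only subtlety worth stating carefully is the cross-term cancellation in the decomposition, which is what allows the bound to carry the constant $2$ rather than the naive $4$ one would get from a direct triangle-inequality argument.
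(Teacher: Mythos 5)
Your proposal is correct and follows essentially the same route as the paper's proof: the same averaged recursion and cancellation for (i), the same convex-combination bound for (ii) (the paper phrases it as triangle inequality plus Cauchy--Schwarz rather than Jensen, but it is the identical estimate), and the same Pythagorean decomposition with vanishing cross-term followed by $\max_i\sum_l \le \sum_l\max_i$ for (iii). The only discrepancy is that the summation index in the statement of (iii) should read $\sum_{i=1}^N$ rather than $\sum_{i=1}^d$, as your argument (and the cross-term cancellation it relies on) correctly assumes.
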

\begin{proof}
(i)~It follows from \eqref{B-1} that 
\begin{equation}\label{C-3}
{\bar X}_{n+1}-{\bar X}_{n}=-\gamma({\bar X}_n-\bar X_n^*)-\sum_{l=1}^d( {\bar x}^l_n- {\bar x}_n^{*,l}) \eta_n^l e_l.
\end{equation}
We subtract \eqref{C-3} from \eqref{B-1} to obtain
\begin{equation} \label{C-4}
(X_{n+1}^i- {\bar X}_{n+1})-(X_{n}^i- {\bar X}_{n})=-\gamma(X_n^i- {\bar X}_n)-\sum_{l=1}^d( x_n^{i,l}- {\bar x}^{l}_n) \eta_n^l  e_l.
\end{equation}
The $l$-th component of \eqref{C-4} implies
\[
x_n^{i,l} -{\bar x}^{l}_n = (x_0^{i,l} -{\bar x}^l_0) \prod_{m=0}^{n-1}\left(1 - \gamma-\eta_m^l\right).
\]
This yields
\[
|X_n^i-\bar X_n|^2=\sum_{l=1}^d (x_0^{i,l} -{\bar x}^l_0)^2 \prod_{m=0}^{n-1}\left(1 - \gamma-\eta_m^l\right)^2.
\]

\vspace{0.5cm}

\noindent (ii)~We apply the triangle inequality and the Cauchy-Schwarz inequality:
\begin{align*}
\begin{aligned}
|{\bar X}_n-\bar X_n^*|^2&=\left|\frac{\sum_{k=1}^Ne^{-\beta L(X_n^k)}( {\bar X}_n-X_n^k)}{\sum_{k=1}^Ne^{-\beta L(X_n^k)}}\right|^2\leq \left[\frac{\sum_{k=1}^Ne^{-\beta L(X_n^k)}| {\bar X}_n-X_n^k|}{\sum_{k=1}^Ne^{-\beta L(X_n^k)}}\right]^2\\
&\leq \frac{\sum_{k=1}^N e^{-\beta L(X_n^k)}| {\bar X}_n-X_n^k|^2}{\sum_{k=1}^Ne^{-\beta L(X_n^k)}}\leq \max_{1\leq k \leq N}| {\bar X}_n-X_n^k |^2.
\end{aligned}
\end{align*}

\vspace{0.5cm}

\noindent (iii)~ Note that
\begin{align*}
\begin{aligned}
\frac{1}{N}\sum_{i=1}^d |X_n^i-\bar X_n^*|^2&=\frac{1}{N}\sum_{i=1}^d\left(|X_n^i-\bar X_n|^2+2(X_n^i-\bar X_n)\cdot(\bar X_n-\bar X_n^*)+|\bar X_n-\bar X_n^*|^2 \right)\\
&=\frac{1}{N}\sum_{i=1}^d\left( |X_n^i-\bar X_n|^2 +|\bar X_n-\bar X_n^*|^2 \right)\leq 2\max_{1\leq i\leq N} |X_n^i-\bar X_n|^2\\
&=2\max_{1\leq i\leq N} \left(\sum_{l=1}^d (x_0^{i,l} -{\bar x}^l_0)^2\prod_{m=0}^{n-1}\left(1 - \gamma-\eta_m^l\right)^2\right)\\
&\leq 2 \sum_{l=1}^d \left(\max_{1\leq i\leq N} (x_0^{i,l} -{\bar x}^l_0)^2\right)\prod_{m=0}^{n-1}\left(1 - \gamma-\eta_m^l\right)^2,
\end{aligned}
\end{align*}
where we used the results from (i) and (ii).
\end{proof}
\begin{lemma}\label{L3.2}
Let $\{X_n^i\}_{1\leq i\leq N}$ be a solution to \eqref{B-1}. Then, one has
\[ \mathbb E|X_n^i-\bar X_n^*|^2\leq 2((1-\gamma)^2 + \zeta^2)^n \sum_{l=1}^d \left(\mathbb E\max_{1\leq i\leq N} (x_0^{i,l} -{\bar x}^l_0)^2\right). \]
\end{lemma}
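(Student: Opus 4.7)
The plan is to take expectations in the almost-sure estimate of Lemma~\ref{L3.1}(iii) and then extract the powers of $(1-\gamma)^2+\zeta^2$ by exploiting the independence structure of the noise. Writing the conclusion of Lemma~\ref{L3.1}(iii) schematically as
\[
\frac{1}{N}\sum_{i=1}^N |X_n^i-\bar X_n^*|^2 \leq 2 \sum_{l=1}^{d} M_l \cdot P_{n,l},
\]
with $M_l := \max_{1\leq k\leq N}(x_0^{k,l}-\bar x_0^l)^2$ and $P_{n,l}:=\prod_{m=0}^{n-1}(1-\gamma-\eta_m^l)^2$, I would observe that $M_l$ is a function of the initial data alone whereas $P_{n,l}$ is a function of the noise sequence $\{\eta_m^l\}_{m\geq 0}$ alone. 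By the standing independence of these two collections, $\mathbb{E}[M_l\, P_{n,l}]=\mathbb{E}[M_l]\cdot\mathbb{E}[P_{n,l}]$ for each $l$.

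Next I would compute $\mathbb{E}[P_{n,l}]$. Since $\{\eta_m^l\}_{m\geq 0}$ are i.i.d.\ in $m$ by \eqref{B-2}, the product expectation factorizes into a product of expectations, and each factor evaluates to
\[
\mathbb{E}(1-\gamma-\eta_m^l)^2 = (1-\gamma)^2 - 2(1-\gamma)\,\mathbb{E}\eta_m^l + \mathbb{E}|\eta_m^l|^2 = (1-\gamma)^2+\zeta^2,
\]
using $\mathbb{E}\eta_m^l=0$ and $\mathbb{E}|\eta_m^l|^2=\zeta^2$. Thus $\mathbb{E}[P_{n,l}]=\bigl((1-\gamma)^2+\zeta^2\bigr)^n$. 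Substituting this back, summing over $l=1,\ldots,d$, and combining with the previous independence step yields the bound
\[
\mathbb{E}\Bigl[\tfrac1N\sum_{i=1}^N|X_n^i-\bar X_n^*|^2\Bigr] \leq 2\bigl((1-\gamma)^2+\zeta^2\bigr)^n \sum_{l=1}^{d}\mathbb{E}\!\max_{1\leq k\leq N}(x_0^{k,l}-\bar x_0^l)^2.
\]

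The main subtlety I expect is the passage from this average in $i$ to a single-particle estimate $\mathbb{E}|X_n^i-\bar X_n^*|^2$. The natural remedy is exchangeability of the initial particles: if $X_0^1,\ldots,X_0^N$ are exchangeable and the noise is i.i.d., then $i\mapsto\mathbb{E}|X_n^i-\bar X_n^*|^2$ is constant in $i$, so this single-particle expectation coincides with the average and the stated inequality follows with the constant $2$. In the absence of exchangeability, one can still obtain a bound of the same form (with a slightly larger constant) by splitting $|X_n^i-\bar X_n^*|^2\leq 2|X_n^i-\bar X_n|^2+2|\bar X_n-\bar X_n^*|^2$, estimating the two pieces respectively by Lemma~\ref{L3.1}(i) and (ii), and then running the identical factorization argument. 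Either way, the only nontrivial ingredients are the two independences (initial data vs.\ noise, and noise across time), which is why this lemma reduces cleanly to an elementary second-moment computation.
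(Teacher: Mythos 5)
Your proof is correct and follows essentially the same route as the paper: take expectations in Lemma 3.1(iii), factor the expectation of the product using the independence of the initial data from the noise and the i.i.d.\ structure of $\{\eta_m^l\}_m$, and evaluate $\mathbb E(1-\gamma-\eta_m^l)^2=(1-\gamma)^2+\zeta^2$ to obtain the $n$-th power. The one point where you are actually more careful than the paper is the passage from the averaged quantity $\frac1N\sum_{i}|X_n^i-\bar X_n^*|^2$ (which is what Lemma 3.1(iii) controls, since the cross term only cancels after summing over $i$) to the single-particle expectation $\mathbb E|X_n^i-\bar X_n^*|^2$: the paper's proof silently makes this replacement, whereas your splitting $|X_n^i-\bar X_n^*|^2\le 2|X_n^i-\bar X_n|^2+2|\bar X_n-\bar X_n^*|^2$ (at the cost of a constant $4$ in place of $2$) or an exchangeability assumption legitimately closes that small gap.
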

\begin{proof}
We take expectation on both sides of the inequality in Lemma \ref{L3.1} (iii) to get
\begin{align*}
\begin{aligned}
 \mathbb E|X_n^i-\bar X_n^*|^2&\leq 2 \sum_{l=1}^d \left(\mathbb E\max_{1\leq i\leq N} (x_0^{i,l} -{\bar x}^l_0)^2\right)\prod_{m=0}^{n-1}\mathbb E\left(1 - \gamma-\eta_m^l\right)^2\\
&\leq 2 \Big((1-\gamma)^2 + \zeta^2 \Big)^n \sum_{l=1}^d \left(\mathbb E\max_{1\leq i\leq N} (x_0^{i,l} -{\bar x}^l_0)^2\right).
\end{aligned}
\end{align*}
\end{proof}
Next, we provide our second main result. 
\begin{theorem}\label{T3.1}
Suppose that system parameters satisfy 
\[   (1 -\gamma)^2 + \zeta^2 < 1, \]
and let $\{X_n^i\}_{1\leq i\leq N}$ be a solution to \eqref{B-1}. Then, there exists a common constant state $X_\infty=(x_\infty^1,\cdots,x_\infty^d)$ such that
\[  \lim\limits_{n\to\infty}  X_n^i=X_\infty~\mbox{a.s.},~1\leq i\leq N. \]
\end{theorem}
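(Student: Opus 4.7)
The plan is to promote the consensus estimates from Lemmas \ref{L3.1}--\ref{L3.2} into convergence of the ensemble average $\{\bar X_n\}$ to a random limit, and then transfer this convergence to every particle using almost-sure consensus. Concretely, I would first show that $\{\bar X_n\}$ is almost surely a Cauchy sequence in $\bbr^d$, identify its a.s.\ limit as $X_\infty$, and finally use $|X_n^i - \bar X_n|\to 0$ a.s.\ to conclude $X_n^i \to X_\infty$ for every $i$.

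For the Cauchy step, starting from the identity \eqref{C-3} for $\bar X_{n+1}-\bar X_n$ and applying $(a+b)^2\leq 2a^2+2b^2$, I would write
\[
|\bar X_{n+1}-\bar X_n|^2 \leq 2\gamma^2\,|\bar X_n-\bar X_n^*|^2 + 2\sum_{l=1}^d (\eta_n^l)^2\,(\bar x_n^l-\bar x_n^{*,l})^2.
\]
Because $\bar X_n$ and $\bar X_n^*$ are measurable with respect to the $\sigma$-algebra generated by $\{\eta_m^l : m\leq n-1\}$, they are independent of $\eta_n^l$. Taking expectations and invoking \eqref{B-2} factors the mixed term and gives
\[
\bbe|\bar X_{n+1}-\bar X_n|^2 \leq 2(\gamma^2+\zeta^2)\,\bbe|\bar X_n-\bar X_n^*|^2.
\]
Setting $r:=(1-\gamma)^2+\zeta^2<1$, Lemma \ref{L3.1}(ii) combined with taking expectation in Lemma \ref{L3.1}(i) (exactly as in Lemma \ref{L3.2}) yields $\bbe|\bar X_n-\bar X_n^*|^2 \leq C\,r^n$, so by Cauchy--Schwarz $\bbe|\bar X_{n+1}-\bar X_n| \leq C'\,r^{n/2}$. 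Since $r<1$, monotone convergence implies $\sum_{n\geq 0}|\bar X_{n+1}-\bar X_n|<\infty$ almost surely; hence $\{\bar X_n\}$ is a.s.\ Cauchy and there exists a random vector $X_\infty$ with $\bar X_n \to X_\infty$ a.s.

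To transfer this to the particles, I would combine Lemma \ref{L3.1}(i) with the exponential bound already used in Lemma \ref{L2.1}(iii): $\prod_{m=0}^{n-1}(1-\gamma-\eta_m^l)^2 \leq e^{-nY_n^l}$, where the strong law of large numbers gives $Y_n^l \to 2\gamma-\gamma^2-\zeta^2 = 1-r>0$ a.s. This forces $|X_n^i-\bar X_n|\to 0$ a.s.\ for every $i$. Writing $X_n^i = \bar X_n + (X_n^i-\bar X_n)$ and letting $n\to\infty$ yields $X_n^i \to X_\infty$ a.s., which is the claim.

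The main technical point, and the only nontrivial step, is the factorization in the Cauchy estimate: one must invoke independence of $\eta_n^l$ from the $n$-th step state to decouple $\bbe[(\eta_n^l)^2(\bar x_n^l-\bar x_n^{*,l})^2]$ into $\zeta^2\,\bbe(\bar x_n^l-\bar x_n^{*,l})^2$, since without this decoupling the analog of Lemma \ref{L3.2} cannot be applied directly to $\bar X_n-\bar X_n^*$. Once this factorization and the associated geometric decay are in hand, everything else follows routinely from the lemmas already proved.
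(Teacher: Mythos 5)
Your proposal is correct, but it proves the theorem by a genuinely different route than the paper. The paper works particle by particle: it telescopes the recursion to write $x_n^{i,l}=x_0^{i,l}-{\mathcal I}_{31}-{\mathcal I}_{32}$ with ${\mathcal I}_{31}=\gamma\sum_{m<n}(x_m^{i,l}-\bar x_m^{*,l})$ and ${\mathcal I}_{32}=\sum_{m<n}(x_m^{i,l}-\bar x_m^{*,l})\eta_m^l$, then shows ${\mathcal I}_{31}$ converges a.s.\ by a monotone-plus-bounded-below argument (after extracting a pathwise bound $|x_m^{i,l}-\bar x_m^{*,l}|\le C_1(\omega)e^{-C_2(\omega)m}$ from the strong law of large numbers), and shows ${\mathcal I}_{32}$ converges a.s.\ by the $L^2$-bounded martingale convergence theorem, finally invoking the consensus result to identify a common limit. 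You instead establish that the ensemble average $\bar X_n$ is a.s.\ Cauchy by showing $\sum_n\bbe|\bar X_{n+1}-\bar X_n|<\infty$ (geometric decay of $\bbe|\bar X_n-\bar X_n^*|^2$ via Lemma \ref{L3.1} plus Tonelli), and then transfer the limit to each particle using the pathwise decay of $|X_n^i-\bar X_n|$. Your argument is more elementary: it dispenses with both the martingale convergence theorem and the somewhat delicate comparison with the random majorant $C_1e^{-C_2 m}$, at the cost of passing through the average rather than treating each particle directly. Both arguments need $\bbe\max_{1\le i\le N}(x_0^{i,l}-\bar x_0^l)^2<\infty$ (the paper needs it for the martingale $L^2$ bound, you need it for the geometric decay of the increments), and both need the SLLN step for the a.s.\ decay of $|X_n^i-\bar X_n|$. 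One small imprecision: $\bar X_n$ and $\bar X_n^*$ are measurable with respect to the $\sigma$-algebra generated by the initial data \emph{together with} $\{\eta_m^l:m\le n-1\}$, not by the noises alone; the independence from $\eta_n^l$ that you invoke then requires the (standard, implicitly assumed) independence of the initial configuration from the noise sequence.
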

\begin{proof}
Note that summing the equation $\eqref{B-1}$ over $n$ yields the following relation: for $i=1,\cdots,N$ and $l=1,\cdots,d$, 
\begin{equation*}\label{C-5}
x_n^{i,l}=x_0^{i,l}-\gamma \sum_{m=0}^{n-1} (x_m^{i,l} - \bar x_m^{*,l})  -  \sum_{m=0}^{n-1} (x_m^{i,l} - \bar x_m^{*,l}) \eta_m^l=:x_0^{i,l}- {\mathcal I}_{31} -  {\mathcal I}_{32}.
\end{equation*}
Next, we show the a.s. convergence of ${\mathcal I}_{31}$ and ${\mathcal I}_{32}$ separately.\newline

\noindent $\bullet$~(Estimate of ${\mathcal I}_{31}$):~ By Lemma \ref{L3.1} (iii), we have 
\begin{align*}
\begin{aligned}
|x_m^{i,l} - \bar x_m^{*,l}| &\leq  \sum_{i=1}^d|X_m^i-\bar X_m^*| \\
&\leq \sqrt{2N\sum_{l=1}^d \left(\max_{1\leq i\leq N} (x_0^{i,l} -{\bar x}^l_0)^2\right)\prod_{p=0}^{m-1}\left(1 - \gamma- \eta_p^l\right)^2}\\
&\leq \sqrt{2N\sum_{l=1}^d \left(\max_{1\leq i\leq N} (x_0^{i,l} -{\bar x}^l_0)^2\right)\prod_{p=0}^{m-1}\exp\left((1 - \gamma- \eta_p^l)^2-1\right)}\\
&= \sqrt{2N\sum_{l=1}^d \left(\max_{1\leq i\leq N} (x_0^{i,l} -{\bar x}^l_0)^2\right)\exp\left(\sum_{p=0}^{m-1}(-2\gamma+\gamma^2-2(1-\gamma)\eta_p^l+(\eta_p^l)^2)\right)}.
\end{aligned}
\end{align*}
On the other hand, by the strong law of large numbers, one has
\begin{align*}
&\lim_{m\to\infty} \frac{1}{m}\sum_{p=0}^{m-1} \Big(-2\gamma+\gamma^2-2(1-\gamma)\eta_p^l+(\eta_p^l)^2 \Big ) \\
& \hspace{1cm} =\mathbb E\Big[ -2\gamma+\gamma^2-2(1-\gamma)\eta_p^l+(\eta_p^l)^2 \Big] =-(2\gamma  - \gamma^2 -\zeta^2 ),\quad \mbox{a.s.}
\end{align*}
This yields that there exist positive random functions $C_i = C_i(\omega),~i= 1,2$ such that 
\[ |x_m^{i,l} - \bar x_m^{*,l}|\leq C_1e^{-C_2m},  \quad \mbox{a.s.}~\omega \in \Omega, \] 
where $C_1$ and $C_2$ are positive constants. We set 
\[ {\mathcal J}_{31} := {\mathcal I}_{31} -\gamma \sum_{m=0}^{n-1} C_1e^{-C_2m}  = \gamma \sum_{m=0}^{n-1} (\underbrace{x_m^{i,l} - \bar x_m^{*,l}- C_1e^{-C_2m}}_{\leq 0}). \]
Since the summand is nonpositive a.s., $ {\mathcal J}_{31}$ is non-increasing in $n$ a.s.  \newline

\noindent On the other hand, note that 
\begin{align*}
\begin{aligned}
 {\mathcal J}_{31}  &= {\mathcal I}_{31} +\gamma \sum_{m=0}^{n-1} C_1e^{-C_2m}-2\gamma \sum_{m=0}^{n-1} C_1e^{-C_2m}\\
&= \gamma \sum_{m=0}^{n-1} (\underbrace{x_m^{i,l} - \bar x_m^{*,l}+ C_1e^{-C_2m}}_{\geq 0}) -2\gamma C_1\frac{1-e^{-C_2n}}{1-e^{-C_2}} \geq -\frac{2\gamma C_1}{1-e^{-C_2}}.
\end{aligned}
\end{align*}
Since ${\mathcal J}_{31}$ is monotone decreasing and bounded below along sample paths, one has
\[  \exists~\alpha = \lim_{n\to \infty} {\mathcal J}_{31}(n) = \lim_{n\to \infty}  \Big( {\mathcal I}_{31}  -\gamma \sum_{m=0}^{n-1} C_1e^{-C_2m}  \Big), \quad \mbox{a.s.} \]
This implies
\[ \lim_{n\to\infty} {\mathcal I}_{31}=\alpha+\frac{C_1\gamma}{1-e^{-C_2}}, \quad \mbox{a.s.} \]

\noindent $\bullet$~(Estimate of ${\mathcal I}_{32}$):~Note that ${\mathcal I}_{32}$ is martingale and its $L^2(\Omega)$-norm is uniformly bounded in $n$:
\begin{align*}
\begin{aligned}
&\mathbb E \left[\sum_{m=0}^{n-1} (x_m^{i,l} - \bar x_m^{*,l}) \eta_m^l\right]^2 =\zeta^2\sum_{m=0}^{n-1}\mathbb E (x_m^{i,l} - \bar x_m^{*,l})^2 \\
& \hspace{1cm} \leq 2N\zeta^2\left(\sum_{m=0}^{n-1}((1-\gamma)^2 + \zeta^2)^m \right) \sum_{l=1}^d \left(\mathbb E\max_{1\leq i\leq N} (x_0^{i,l} -{\bar x}^l_0)^2\right) \\
& \hspace{1cm} \leq  \frac{2N\zeta^2 }{2\gamma  - \gamma^2 -\zeta^2}  \sum_{l=1}^d \left(\mathbb E\max_{1\leq i\leq N} (x_0^{i,l} -{\bar x}^l_0)^2\right).
\end{aligned}
\end{align*}
In the first inequality we used (iii). Hence $\lim\limits_{n\to\infty} {\mathcal I}_{32}$ exists a.s. Now we have shown that for each $i=1,\cdots, N$, there exists some random variable $X^{i}_{\infty}$ such that 
\[ \lim\limits_{n\to\infty}X_n^i=X^{i}_{\infty} \quad \mbox{a.s.} \]
Recall that by Theorem \ref{T2.1}, for any $1\leq i,j\leq N$,  
\[ \lim\limits_{n\to\infty}|X_n^i-X_n^j|=0, \quad \mbox{a.s.} \]
Hence, there exists $X_\infty$ such that 
\[ X^{i}_{\infty}=X^{j}_{\infty}=:X_\infty \quad \mbox{a.s.} \]
\end{proof}

\subsection{Error estimate} \label{sec:3.2}
In this subsection, we study an error analysis of \eqref{B-1} toward the global minumum. Below, we present a sufficient framework $({\mathcal A}1) - ({\mathcal A}3)$ for error analysis in terms of the objective function $L$, global minimum point $X_*$ and reference 
random variable $X_{in}$ as follows: \newline
\begin{itemize}
\item
$({\mathcal A}1)$:~Let $L = L(x)$ be a $C^2$-objective function  satisfying the following relations:
\begin{equation*} \label{C-5}
 L_m:=\min_{x \in \bbr^d} L(x) >0 \quad \mbox{and} \quad C_L:=\sup_{x\in\bbr^d}\|\nabla^2L(x)\|_2 <\infty,
\end{equation*} 
where $\|\cdot\|_2$ denotes the spectral norm. 

\vspace{0.2cm}

\item
$({\mathcal A}2)$:~Let $X_*$ be the unique global minimum point of $L$ in $\bbr^d$ satisfying the local convexity relation:
\begin{equation*} \label{C-5-0}
\operatorname{det}\left(\nabla^2 L(X_*)\right)>0.
\end{equation*}

\vspace{0.2cm}

\item 
$({\mathcal A}3)$:~Let $X_{in}$ be a reference random variable with a law which is absolutely continuous with respect to the Lebesgue measure, and let $f$ be the probability density function
of $X_{in}$ satisfying the following conditions:
\begin{equation*} \label{C-5-1}
\mbox{$f$ is compactly supported, continuous at $X_*$, and $f(X_*)>0$}. 
\end{equation*}
\end{itemize}
In the next theorem, we study how close is the common consensus state $X_\infty$ to the global minimum $X_*$ in suitable sense.
Now, we are ready to provide an error analysis of the discrete CBO algorithm, which is analogous to Theorem 4.1 in \cite{H-J-K} for the continuous case. 
\begin{theorem} \label{T3.2}
Suppose that the framework $({\mathcal  A}1) - ({\mathcal  A}3)$ holds, and system parameters $\beta, \gamma, \zeta$ and the initial data $\{X_0^i \}$ satisfy 
\begin{align}
\begin{aligned} \label{C-5-2}
& \beta > 0, \quad  (\gamma-1)^2 + \zeta^2 < 1, \quad X_0^i: i,i.d, \quad X_0^i \sim X_{in}, \\
& (1-\varepsilon)\mathbb E \Big[ e^{-\beta L(X_{in})} \Big ] \\
& \hspace{0.5cm} \geq \frac{2C_L \sqrt{ \big(1+ (1-\gamma)^2+\zeta^2\big) \big( \gamma^2+\zeta^2\big)} \beta  e^{-\beta L_m}}{1-e^{-[ 1-(\gamma - 1)^2 - \zeta^2]}} \sum_{l=1}^d \left(\mathbb E\max_{1\leq i\leq N} (x_0^{i,l} -{\bar x}^l_0)^2\right),
\end{aligned}
\end{align}
for some $0<\varepsilon<1$. Then for a solution $\{X_n^i\}_{1\leq i\leq N}$ to \eqref{A-1}, one has the following error estimate:
\begin{equation} \label{C-5-3}
\Big| \essinf\limits_{\omega\in\Omega} L(X_\infty) - L(X_*) \Big| \leq \frac{d}{2}\frac{\log\beta}{\beta}+ E(\beta),
\end{equation}
for some function $E(\beta)= {\mathcal O}\left(\frac{1}{\beta} \right)$.
\end{theorem}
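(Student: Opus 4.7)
The plan is to combine Proposition~\ref{P3.1} with the classical Laplace asymptotic method. Proposition~\ref{P3.1} provides the one-sided bound
\[
\essinf_{\omega\in\Omega} L(X_\infty)\leq -\frac{1}{\beta}\log \mathbb{E}\bigl[e^{-\beta L(X_{in})}\bigr]-\frac{1}{\beta}\log \varepsilon,
\]
whose hypotheses are exactly the conditions in \eqref{C-5-2}. The matching lower bound $\essinf L(X_\infty)\geq L(X_*)$ is immediate from $(\mathcal{A}2)$ since $L\geq L(X_*)$ pointwise on $\bbr^d$. Consequently $|\essinf L(X_\infty)-L(X_*)|=\essinf L(X_\infty)-L(X_*)$, and the task reduces to bounding this difference by $\tfrac{d}{2}\tfrac{\log\beta}{\beta}+O(1/\beta)$.

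The core analytic step is the Laplace asymptotic expansion of
\[
\mathbb{E}\bigl[e^{-\beta L(X_{in})}\bigr] = \int_{\bbr^d} f(x)\, e^{-\beta L(x)}\,dx\qquad\text{as }\beta\to\infty.
\]
Assumption $(\mathcal{A}1)$ provides a $C^2$ objective with bounded Hessian; $(\mathcal{A}2)$ guarantees a unique nondegenerate global minimum at $X_*$ with positive-definite $\nabla^2 L(X_*)$; and $(\mathcal{A}3)$ ensures that $f$ is compactly supported, continuous at $X_*$, and strictly positive there. Under precisely these conditions the Laplace method (cf.\ Hsu) yields
\[
\mathbb{E}\bigl[e^{-\beta L(X_{in})}\bigr] = f(X_*)\left(\frac{2\pi}{\beta}\right)^{d/2}\frac{e^{-\beta L(X_*)}}{\sqrt{\det\nabla^2 L(X_*)}}\bigl(1+o(1)\bigr).
\]
The compact support of $f$ kills any non-local contributions: the portion of the integral on the complement of a fixed neighborhood of $X_*$ is bounded by $\|f\|_\infty\cdot\text{vol}(\text{supp}\,f)\cdot e^{-\beta(L(X_*)+\delta)}$ for some $\delta>0$, which is exponentially smaller than the leading term.

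Taking $-\tfrac{1}{\beta}\log$ of the Laplace expansion turns the algebraic prefactor into an $O(1/\beta)$ correction while extracting the logarithmic term explicitly:
\[
-\frac{1}{\beta}\log \mathbb{E}\bigl[e^{-\beta L(X_{in})}\bigr] = L(X_*)+\frac{d}{2}\frac{\log\beta}{\beta}+E_0(\beta),
\]
with $E_0(\beta)=O(1/\beta)$ as $\beta\to\infty$. Adding the further $-\tfrac{1}{\beta}\log\varepsilon=O(1/\beta)$ term from Proposition~\ref{P3.1} and combining both into a single function $E(\beta)=O(1/\beta)$ yields the estimate~\eqref{C-5-3}.

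The routine parts are the Laplace expansion and the bookkeeping of $O(1/\beta)$ terms; the only delicate substantive ingredient is Proposition~\ref{P3.1} itself, which the present theorem treats as a black box. That proposition is where the quantitative inequality in~\eqref{C-5-2} is actually consumed: it is designed to ensure that the empirical partition function $\tfrac{1}{N}\sum_j e^{-\beta L(X_n^j)}$ stays sufficiently close (in an $\varepsilon$-controlled, uniform-in-$n$ sense) to its initial value $\mathbb{E}\,e^{-\beta L(X_{in})}$, so that passing $n\to\infty$ yields a Gibbs lower bound on $e^{-\beta L(X_\infty)}$. Once this is granted, Theorem~\ref{T3.2} is essentially the classical Laplace principle dressed in the consensus bound.
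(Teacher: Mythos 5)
There is a genuine gap. The inequality
\[
\essinf_{\omega\in\Omega} L(X_\infty)\;\leq\;-\frac{1}{\beta}\log \mathbb{E}\bigl[e^{-\beta L(X_{in})}\bigr]-\frac{1}{\beta}\log \varepsilon
\]
is \emph{not} what Proposition \ref{P3.1} states, and it is exactly the part of the argument you have not supplied. Proposition \ref{P3.1} (as stated in Section \ref{sec:3}) is the Laplace asymptotic expansion
$-\frac{1}{\beta}\log\mathbb Ee^{-\beta L(X_{in})} = L(X_*)+\frac{d}{2}\frac{\log\beta}{\beta}+\mathcal O(1/\beta)$,
i.e.\ precisely the ``routine'' computation you carry out in the middle of your proposal; its hypotheses concern only $f$ and $L$, not $\gamma,\zeta$ or the initial spread, so it cannot be where $\eqref{C-5-2}_2$ is consumed. (The introduction of the paper does mislabel the key estimate as Proposition \ref{P3.1}, which may be the source of the confusion, but in the body of the paper that estimate is proved inside the proof of Theorem \ref{T3.2} itself.) As written, your proposal re-derives the easy half and black-boxes the hard half.

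The missing half is the dynamical estimate $\mathbb E e^{-\beta L(X_\infty)}\geq \varepsilon\,\mathbb E e^{-\beta L(X_{in})}$, which requires the full structure of the scheme. Concretely one must show a quasi-monotonicity of the empirical partition function $M_n:=\frac1N\sum_i e^{-\beta L(X_n^i)}$ of the form
\[
\mathbb E M_{n+1}-\mathbb E M_n\;\geq\;-C_L\sqrt{\bigl(1+(1-\gamma)^2+\zeta^2\bigr)\bigl(\gamma^2+\zeta^2\bigr)}\,\beta e^{-\beta L_m}\cdot 2\bigl((1-\gamma)^2+\zeta^2\bigr)^n\sum_{l=1}^d\mathbb E\max_i(x_0^{i,l}-\bar x_0^l)^2,
\]
using (a) the convexity bound $e^{-\beta\Delta L}-1\geq-\beta\Delta L$, (b) the exact cancellation $\sum_i e^{-\beta L(X_n^i)}\nabla L(\bar X_n^*)\cdot(X_{n+1}^i-X_n^i)=0$, which follows from the definition of the weighted average $\bar X_n^*$ and the fact that $X_{n+1}^i-X_n^i$ is componentwise proportional to $x_n^{i,l}-\bar x_n^{*,l}$ --- this is the step that lets one insert $\nabla L(\bar X_n^*)$ and pay only a Hessian-times-distance-squared price, and (c) the geometric decay of $\mathbb E|X_n^i-\bar X_n^*|^2$ from Lemma \ref{L3.2}. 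Summing the geometric series over $n$ and invoking $\eqref{C-5-2}_2$ then yields the Gibbs lower bound, and the claimed inequality follows by Jensen/monotonicity and taking logarithms. Without this step the proof is circular at its crucial point. Your side remark that $\essinf_{\omega}L(X_\infty)\geq L(X_*)$ is trivial (so the absolute value in \eqref{C-5-3} is harmless) is correct, and is in fact a detail the paper's own write-up leaves implicit.
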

\begin{remark}\label{R3.1}
1. Note that the third condition $\eqref{C-5-2}_2$:
\begin{equation} \label{C-5-4}
(1-\varepsilon)\mathbb E \Big[ e^{-\beta L(X_{in})} \Big ] \geq \frac{2C_L \sqrt{ \big(1+ (1-\gamma)^2+\zeta^2\big) \big( \gamma^2+\zeta^2\big)} \beta  e^{-\beta L_m}}{1-e^{-[ 1-(\gamma - 1)^2 - \zeta^2]}} \sum_{l=1}^d \left(\mathbb E\max_{1\leq i\leq N} (x_0^{i,l} -{\bar x}^l_0)^2\right),
\end{equation}
does hold only for some intermediate $\beta$ for given initial data satisfying
\[ \sum_{l=1}^d \left(\mathbb E\max_{1\leq i\leq N} (x_0^{i,l} -{\bar x}^l_0)^2\right) > 0. \]
This can be checked as follows. We multiply $e^{\beta L_m}$ on the both sides of \eqref{C-5-4} to get
\[ (1-\varepsilon)\mathbb E \Big[ e^{\beta(L_m - L(X_{in}))} \Big ] \geq \frac{2C_L \sqrt{ \big(1+ (1-\gamma)^2+\zeta^2\big) \big( \gamma^2+\zeta^2\big)} \beta }{1-e^{-(2\gamma  - \gamma^2 -\zeta^2)}} \sum_{l=1}^d \left(\mathbb E\max_{1\leq i\leq N} (x_0^{i,l} -{\bar x}^l_0)^2\right). \]
By letting $\beta \to \infty$ in the above relation, we can see that L.H.S. is less than or equal to $1-\varepsilon$, but R.H.S. goes to $+\infty$. Hence, the estimate \eqref{C-5-3} is only an error estimate for the discrete CBO algorithm for fixed $\beta$ (indeed numerically one chooses a fixed $\beta$ for computation), not viewed as a convergence for $\beta \to \infty$ (which would impose restrictive  constratin on the initial data from $\eqref{C-5-2}_2$. In Appendix A, we provide an alternative error analysis for \eqref{A-1} without employing Laplace's principle. \newline

\noindent 2. In the sequel, we check how the conditions in \eqref{C-5-2} can be reinterpreted for Model A - Model C in Section \ref{sec:2.1}: \newline

\noindent $\diamond$~{\bf Model A}:~We use $(\lambda, \zeta) = (\lambda h, \sigma \sqrt{h})$ to rewrite \eqref{C-5-2} as 
\begin{align*}
\begin{aligned} 
& \beta > 0, \quad \lambda > \frac{\sigma^2}{2}, \quad 0 < h  < \frac{2\lambda - \sigma^2}{\lambda^2}, \\
& (1-\varepsilon)\mathbb E \Big[ e^{-\beta L(X_{in})} \Big ] \geq \frac{2C_L \sqrt{h(\sigma^2 + \lambda^2 h)[ 2 + h (\sigma^2 + \lambda^2 h - 2\lambda)]} \beta  e^{-\beta L_m}}{1-e^{-\lambda(2h  - \lambda h^2 -\sigma^2)  }} \\
& \hspace{3.5cm} \times \sum_{l=1}^d \left(\mathbb E\max_{1\leq i\leq N} (x_0^{i,l} -{\bar x}^l_0)^2\right).
\end{aligned}
\end{align*}
\noindent $\diamond$~{\bf Model B}:~We use $(\lambda, \zeta) = (1 - e^{-\lambda h}, e^{-\lambda h} \sigma \sqrt{h})$ to rewrite \eqref{C-5-2} as 
\begin{align*}
\begin{aligned}
& \beta > 0, \quad h > 0, \quad (1+\sigma^2h)e^{-2\lambda h}<1, \\
& (1-\varepsilon)\mathbb E \Big[ e^{-\beta L(X_{in})} \Big ] \geq \frac{2C_L \sqrt{[ 1 + e^{-\lambda h} (-2 + e^{-\lambda h} (1 + \sigma^2 h))][ 1 + e^{-2\lambda h} (1 + \sigma^2 h)] } \beta  e^{-\beta L_m}}{1-e^{-[1 - (1 + \sigma^2 h) e^{-2\lambda h}]}  } \\
& \hspace{3.5cm} \times \sum_{l=1}^d \left(\mathbb E\max_{1\leq i\leq N} (x_0^{i,l} -{\bar x}^l_0)^2\right).
\end{aligned}
\end{align*}
\noindent $\diamond$~{\bf Model C}:~We use $(\lambda, \zeta) =(1 - e^{-\lambda h}, e^{-\lambda h} \sqrt{e^{\sigma^2 h} - 1})$ to rewrite \eqref{C-5-2} as 
\begin{align*}
\begin{aligned}
& \beta > 0, \quad h > 0,  \quad \lambda > \frac{\sigma^2}{2}, \\
& (1-\varepsilon)\mathbb E \Big[ e^{-\beta L(X_{in})} \Big ] \geq 
\frac{2C_L \sqrt{ (1 + e^{h(\sigma^2 - 2h)}) ( 1 + e^{-\lambda h} (e^{-\lambda h} + e^{\sigma^2 h} - 3) )} \beta  e^{-\beta L_m}}{1-e^{-(1 - e^{(\sigma^2 - 2\lambda) h })}} \\
& \hspace{3.5cm} \times \sum_{l=1}^d \left(\mathbb E\max_{1\leq i\leq N} (x_0^{i,l} -{\bar x}^l_0)^2\right).
\end{aligned}
\end{align*}
\end{remark}

\vspace{0.5cm}

Before we present a proof of Theorem \ref{T3.2}, we first briefly review Laplace's principle with a convergence rate. \newline

\noindent $\bullet$~{\bf (Laplace's principle with a rate for $\beta \gg 1$)}:~Note that under some suitable conditions on $L$ and the law of $X_{in}$, we can apply Varadhan's lemma (cf. \cite{D-Z}) to get
\begin{equation} \label{New-1}
\lim_{\beta\to\infty}\left(-\frac{1}{\beta}\log \mathbb E e^{-\beta L(X_{in})}\right)=\essinf\limits_{\omega\in\Omega} L(X_{in}).
\end{equation}
However, as far as the authors know, standard proofs for Varadhan's lemma do not yield a convergence rate. Hence, we try a different approach for a possible convergence rate in \eqref{New-1}. \newline

Recall that the set $D \in \bbr^d$ is called a {\it d-dimensional closed domain} if it is a bounded finitely connected open set (in $\bbr^d$) plus its boundary, where the boundary is a Euclidean $(d-1)$-surface. One simple example of such $D$ is a closed ball in $\bbr^d$. In the sequel, let $D$ be a $d$-dimensional closed domain. 
\begin{lemma}\label{L3.3}\cite{Hsu}
Suppose that two functions $\phi:D\to\bbr$ and $h:D\to\bbr$ satisfy the following conditions:
\begin{enumerate}
\item $h$ is positive and is of class $C^2$.
\item $\phi h^n$ is absolutely integrable over $D$, $n=1,2,\cdots$.
\item $h$ has an absolutely maximum value at an interior point $\xi$ of $D$ and $\operatorname{det}\left(-\nabla^2 h(\xi)\right)>0$.
\item $\phi$ is continuous at $\xi$ and $\phi(\xi)\neq 0$.
\end{enumerate}
Then, we have
\[
\int_D \phi(x)[h(x)]^n dx\sim \left(\frac{2\pi}{n}\right)^\frac{d}{2}\frac{\phi(\xi)[h(\xi)]^n}{\sqrt{\operatorname{det}\left(-\nabla^2\log h(\xi)\right)}},\qquad n\to\infty.
\]
\end{lemma}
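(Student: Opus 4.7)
The plan is to apply the classical multi-dimensional Laplace method. Since $\xi$ is a non-degenerate interior maximum of $h$, the integrand $\phi h^n$ concentrates sharply at $\xi$ as $n$ grows, asymptotically taking the shape of a Gaussian with covariance $\frac{1}{n}(-\nabla^2 \log h(\xi))^{-1}$. I would execute three steps: localize the integral to a small ball around $\xi$, expand $\log h$ quadratically and rescale to recover a Gaussian integral, and control the complementary region as exponentially negligible.

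For the localization, fix $\delta > 0$ small with $\overline{B_\delta(\xi)} \subset D$. Non-degeneracy of the Hessian together with $\xi$ being the absolute maximum gives $h(\xi) - h(x) \geq \eta > 0$ on $D \setminus B_\delta(\xi)$ for some $\eta > 0$. Using condition (2) with exponent $1$,
\[
\left|\int_{D \setminus B_\delta(\xi)} \phi(x) h(x)^n\, dx\right| \leq \left(1 - \tfrac{\eta}{h(\xi)}\right)^{n-1} h(\xi)^{n-1} \int_D |\phi(x)|\, h(x)\, dx,
\]
which decays exponentially fast relative to the Gaussian target $h(\xi)^n n^{-d/2}$. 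For the local piece, set $A := \nabla^2 \log h(\xi) = h(\xi)^{-1}\nabla^2 h(\xi)$, which is negative definite by condition (3) together with $\xi$ being a maximum. Since $h \in C^2$, Taylor's formula at $\xi$ (where $\nabla \log h(\xi) = 0$) gives
\[
\log h(x) - \log h(\xi) = \tfrac{1}{2}(x-\xi)^T A (x-\xi) + R(x), \qquad R(x) = o(|x-\xi|^2).
\]
Substituting $y = \sqrt{n}(x-\xi)$ then produces
\[
\int_{B_\delta(\xi)} \phi(x) h(x)^n\, dx = \frac{h(\xi)^n}{n^{d/2}} \int_{|y|<\delta\sqrt{n}} \phi\bigl(\xi + \tfrac{y}{\sqrt{n}}\bigr) \exp\Bigl(\tfrac{1}{2} y^T A y + n R\bigl(\xi + \tfrac{y}{\sqrt{n}}\bigr)\Bigr) dy,
\]
and pointwise the integrand converges to $\phi(\xi) \exp(\tfrac{1}{2} y^T A y)$ by continuity of $\phi$ at $\xi$ and the identity $n R(\xi + y/\sqrt{n}) = |y|^2\, o(1) \to 0$.

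The main obstacle is producing an integrable majorant so that dominated convergence can be invoked on the rescaled integral, since $R$ is known only to be $o(|x-\xi|^2)$ under a $C^2$ hypothesis (not $O(|x-\xi|^3)$). This is handled by shrinking $\delta$ so that $|R(x)| \leq \tfrac{1}{4}(x-\xi)^T(-A)(x-\xi)$ on $B_\delta(\xi)$, which is possible because $R(x)/|x-\xi|^2 \to 0$ while the quadratic form $(x-\xi)^T(-A)(x-\xi)/|x-\xi|^2$ stays above $\lambda_{\min}(-A) > 0$. Then $\tfrac{1}{2} y^T A y + n R(\xi + y/\sqrt{n}) \leq \tfrac{1}{4} y^T A y$, so the integrand is dominated by $\sup_{B_\delta(\xi)}|\phi| \cdot \exp(\tfrac{1}{4} y^T A y)$, which is integrable on $\bbr^d$. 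Dominated convergence then yields
\[
\int_{B_\delta(\xi)} \phi(x) h(x)^n\, dx \sim \frac{h(\xi)^n}{n^{d/2}}\, \phi(\xi) \int_{\bbr^d} e^{\frac{1}{2} y^T A y}\, dy = \Bigl(\frac{2\pi}{n}\Bigr)^{d/2} \frac{\phi(\xi) h(\xi)^n}{\sqrt{\det(-A)}},
\]
which combined with the exponentially small tail produces the claimed asymptotic.
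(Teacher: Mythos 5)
Your proof is correct, but it is genuinely different from what the paper does: the paper offers no argument at all for this lemma and simply defers to Lemma~1 of the cited reference \cite{Hsu}, whereas you supply a complete, self-contained proof by the standard multi-dimensional Laplace method (localization, quadratic expansion of $\log h$ with the rescaling $y=\sqrt{n}(x-\xi)$, dominated convergence, and an exponentially small tail). Your treatment of the usual sticking point under a mere $C^2$ hypothesis --- absorbing the Peano remainder $R(x)=o(|x-\xi|^2)$ into a quarter of the negative-definite quadratic form by shrinking $\delta$, so that $e^{\frac14 y^TAy}$ serves as an integrable majorant --- is exactly the right fix, and the tail bound correctly uses condition (2) with $n=1$ to control $\int_D|\phi|h$. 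Two small points worth making explicit: the estimate $h\le h(\xi)-\eta$ off $B_\delta(\xi)$ requires reading ``absolute maximum at $\xi$'' as a \emph{unique} global maximum (which is the intended reading, since otherwise the asymptotic is false), and the finiteness of $\sup_{B_\delta(\xi)}|\phi|$ in your majorant rests on the local boundedness that continuity of $\phi$ at $\xi$ provides, so $\delta$ should also be taken small enough for that. What your route buys is independence from the 1948 reference and transparency about exactly which hypotheses enter where; what the paper's route buys is brevity, since the result is classical.
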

\begin{proof}
We refer to Lemma 1 in \cite{Hsu} for details.
\end{proof}
\begin{proposition}\label{P3.1}
Suppose that the law of $X_{in}$ is absolutely continuous with respect to the Lebesgue measure on $\bbr^d$ and let $f$ be a probability density function. Suppose that
\begin{enumerate}
\item $f$ is compactly supported.
\item $f$ is continuous at the global minimizer of $L$, namely $X_*$, and $f(X_*)>0$.
\item $L$ is $C^2$ and $\operatorname{det}\left(\nabla^2 L(X_*)\right)>0.$
\end{enumerate}
Then, we have
\[
-\frac{1}{\beta}\log\mathbb Ee^{-\beta L(X_{in})} = L(X_*)+ \frac{d}{2}\frac{\log\beta}{\beta}+ {\mathcal O}\left(\frac{1}{\beta} \right),\qquad \beta\to\infty.
\]
\end{proposition}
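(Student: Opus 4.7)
The plan is to recognize $\mathbb E e^{-\beta L(X_{in})} = \int_{\mathbb R^d} f(x) e^{-\beta L(x)}\,dx$ as a Laplace integral and apply Lemma \ref{L3.3} with $\phi := f$ and $h := e^{-L}$, then take $-\tfrac{1}{\beta}\log$ and read off the leading-order expansion. Because $f$ is continuous at $X_*$ with $f(X_*)>0$, $f$ stays strictly positive in some open ball around $X_*$, so $X_*$ lies in the interior of $\mathrm{supp}(f)$. I would therefore choose $D$ to be a closed Euclidean ball that contains $\mathrm{supp}(f)$ in its interior; then $X_* \in \mathrm{int}(D)$ and the integral over $\mathbb R^d$ agrees with the integral over $D$.

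Next I would verify the four hypotheses of Lemma \ref{L3.3}. Strict positivity and $C^2$-smoothness of $h = e^{-L}$ follow from $L \in C^2(\mathbb R^d)$ under $(\mathcal A 1)$; absolute integrability of $\phi h^\beta = f e^{-\beta L}$ over $D$ is immediate since $e^{-\beta L}\leq 1$ and $f \in L^1(\mathbb R^d)$; and continuity of $\phi=f$ at $X_*$ with $\phi(X_*)\neq 0$ is exactly part of $(\mathcal A 3)$. The key structural item is that $h$ attains its absolute maximum on $D$ at an interior point with non-degenerate Hessian: since $X_*$ is the unique global minimizer of $L$ on $\mathbb R^d$ by $(\mathcal A 2)$, $X_*$ is the unique maximizer of $h$ on $D$, and using $\nabla L(X_*)=0$ one computes
\[
\nabla^2 h(X_*) = -e^{-L(X_*)}\nabla^2 L(X_*), \qquad -\nabla^2 \log h(X_*) = \nabla^2 L(X_*),
\]
so $\det\bigl(-\nabla^2 h(X_*)\bigr) = e^{-dL(X_*)}\det\bigl(\nabla^2 L(X_*)\bigr)>0$ by $(\mathcal A 2)$. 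Lemma \ref{L3.3} (adapted from integer $n$ to continuous $\beta\to\infty$, which causes no change in its proof) then gives
\[
\mathbb E e^{-\beta L(X_{in})} = \left(\frac{2\pi}{\beta}\right)^{d/2}\frac{f(X_*)\,e^{-\beta L(X_*)}}{\sqrt{\det(\nabla^2 L(X_*))}}\bigl(1 + o(1)\bigr), \qquad \beta \to \infty.
\]

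Taking $-\tfrac{1}{\beta}\log$ of both sides, the exponential factor $e^{-\beta L(X_*)}$ contributes $L(X_*)$, the prefactor $\beta^{-d/2}$ contributes $\tfrac{d}{2}\tfrac{\log\beta}{\beta}$, and both the constant factor $(2\pi)^{d/2} f(X_*)/\sqrt{\det\nabla^2 L(X_*)}$ and the multiplicative error $1+o(1)$ produce terms of size $\mathcal O(1/\beta)$, since $\log(1+o(1))=o(1)$ and a bounded constant divided by $\beta$ is $\mathcal O(1/\beta)$; summing them yields the claimed expansion. The main obstacle is structural rather than computational: one must ensure $X_*$ is interior to the domain where Hsu's lemma is invoked, which is exactly what the hypothesis $f(X_*)>0$ together with continuity of $f$ at $X_*$ is designed to guarantee. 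Once the domain is set up, every other step is a direct reading of Lemma \ref{L3.3} followed by a routine log-expansion.
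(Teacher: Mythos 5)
Your proposal is correct and follows essentially the same route as the paper: both set $\phi=f$, $h=e^{-L}$ on a closed ball $D$ containing $\mathrm{supp}(f)$, verify the hypotheses of Lemma \ref{L3.3} via the same Hessian computation $\det\bigl(-\nabla^2 h(X_*)\bigr)=e^{-dL(X_*)}\det\bigl(\nabla^2 L(X_*)\bigr)>0$, and then extract the expansion by taking logarithms and dividing by $-\beta$. Your explicit remarks on why $X_*$ is interior to $D$ and on passing from integer $n$ to continuous $\beta$ are careful touches the paper leaves implicit, but they do not change the argument.
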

\begin{proof}
Let $D$ be any closed ball containing the support of $f$. We set
\[ \phi(x):=f(x) \quad \mbox{and} \quad h(x):=e^{-L(x)}. \]
For the above pair $(\phi, h)$, we will check that the assumptions in Lemma \ref{L3.3} hold. Note that
\begin{align*}
\begin{aligned}
\operatorname{det}\left(-\nabla^2 h(X_*)\right) &=\operatorname{det}\left(e^{-L(X_*)}\big(\nabla^2 L(X_*)-(\nabla L\otimes\nabla L)(X_*)\big)\right) \\
&=e^{-dL(X_*)}\operatorname{det}\left(\nabla^2 L(X_*) \right)>0.
\end{aligned}
\end{align*}
The rest of the assumptions can be checked straightforwardly. By Lemma \ref{L3.3}, we have
\begin{equation} \label{New-2}
\lim_{\beta\to\infty}\frac{\sqrt{\operatorname{det}\left(\nabla^2 L(X_*)\right)}\mathbb Ee^{-\beta L(X_{in})}}{\left(\frac{2\pi}{\beta}\right)^\frac{d}{2}f(X_*)e^{-\beta L(X_{*})}}=1.
\end{equation}
By taking logarithms on both sides of \eqref{New-2}, we get
\[
\log\mathbb Ee^{-\beta L(X_{in})}-\frac{d}{2}\log \left(\frac{2\pi}{\beta}\right)-\log f(X_*)+\beta L(X_*)+\frac{1}{2 }\log\operatorname{det}\left( \nabla^2 L(X_*)\right)={\mathcal O}(1).
\]
We multiply $-\frac{1}{\beta}$ on both sides to obtain
\[
-\frac{1}{\beta}\log\mathbb Ee^{-\beta L(X_{in})} = L(X_*)+\left(-\frac{d}{2}\log \left( 2\pi \right)-\log f(X_*)+\frac{1}{2 }\log\operatorname{det}\left( \nabla^2 L(X_*)\right)\right)\frac{1}{\beta}+\frac{d}{2}\frac{\log\beta}{\beta}+{\mathcal O}\left(\frac{1}{\beta}\right).
\]
Hence, one has the desired estimate:
\[
-\frac{1}{\beta}\log\mathbb Ee^{-\beta L(X_{in})} = L(X_*)+ \frac{d}{2}\frac{\log\beta}{\beta}+{\mathcal O}\left(\frac{1}{\beta} \right),
\]
for a sufficiently large $\beta$.
\end{proof}

\noindent $\bullet$~{\bf (Proof of Theorem \ref{T3.2}))}:~First, we claim:
\begin{equation} \label{New-3}
\essinf\limits_{\omega\in\Omega} L(X_\infty)\leq-\frac{1}{\beta}\log \mathbb E e^{-\beta L(X_{in})}-\frac{1}{\beta}\log\varepsilon.
\end{equation}
Note that
\begin{align}
\begin{aligned} \label{C-6}
& \frac{1}{N}\sum_{i=1}^N e^{-\beta L(X_{n+1}^i)}-\frac{1}{N}\sum_{i=1}^N e^{-\beta L(X_n^i)}  \\
& \hspace{0.5cm} = \frac{1}{N}\sum_{i=1}^N  e^{-\beta L(X_n^i)}\big(e^{-\beta (L(X_{n+1}^i)- L(X_n^i))}-1\big)\\
&  \hspace{0.5cm} \geq \frac{1}{N}\sum_{i=1}^N  e^{-\beta L(X_n^i)}(-\beta)\big(L(X_{n+1}^i)- L(X_n^i)\big) \\
& \hspace{0.5cm} =-\frac{\beta}{N}\sum_{i=1}^N  e^{-\beta L(X_n^i)}\nabla L(cX_{n+1}^i+(1-c)X_n^i)\cdot(X_{n+1}^i-X_n^i),
\end{aligned}
\end{align}
where we used a mean-value theorem with $c\in (0,1)$. We use definition of $\bar X_n^*$ to see 
\[ \bigg(\sum_{i=1}^N e^{-\beta L(X_n^i)}\bigg)\bar X_n^*=\sum_{i=1}^N e^{-\beta L(X_n^i)}X_n^i,  \quad 
 \bigg(\sum_{i=1}^N e^{-\beta L(X_n^i)}\bigg)\bar x_n^{*,l}=\sum_{i=1}^N e^{-\beta L(X_n^i)}x_n^{i,l},\qquad l=1,\cdots,d.
\]
This yields
\begin{align} \label{C-7a}
\begin{aligned}
  \sum_{i=1}^N& e^{-\beta L(X_n^i)}\nabla L(\bar X_n^*)\cdot (X_{n+1}^i-X_n^i)\\
&=\sum_{i=1}^N e^{-\beta L(X_n^i)}\nabla L(\bar X_n^*)\cdot \bigg( -\gamma(X^i_n - {\bar X}_n^*)  -\sum_{l=1}^{d} (x^{i,l}_n - {\bar x}_n^{*,l}) \eta_n^l e_l\bigg)\\
&=\sum_{i=1}^N e^{-\beta L(X_n^i)}\nabla L(\bar X_n^*)\cdot \bigg(  -\sum_{l=1}^{d} (x^{i,l}_n - {\bar x}_n^{*,l}) (\gamma+\eta_n^l )e_l\bigg) =0.
 \end{aligned}
\end{align}
Combining \eqref{C-6} and \eqref{C-7a} gives
\begin{align} 
\begin{aligned} \label{C-7}
& \frac{1}{N}\sum_{i=1}^N e^{-\beta L(X_{n+1}^i)}-\frac{1}{N}\sum_{i=1}^N e^{-\beta L(X_n^i)}\\
& \geq-\frac{\beta}{N}\sum_{i=1}^N  e^{-\beta L(X_n^i)}\nabla L(cX_{n+1}^i+(1-c)X_n^i)\cdot(X_{n+1}^i-X_n^i)\\
&  =-\frac{\beta}{N}\sum_{i=1}^N  e^{-\beta L(X_n^i)}\big(\nabla L(cX_{n+1}^i+(1-c)X_n^i)-\nabla L(\bar X_n^*)\big)\cdot(X_{n+1}^i-X_n^i) \\
& =-\frac{\beta}{N}\sum_{i=1}^N  e^{-\beta L(X_n^i)} \\
& \times \bigg[\int_0^1 \nabla^2 L\big(s(cX_{n+1}^i+(1-c)X_n^i)+(1-s)\bar X_n^*\big)\cdot\big(cX_{n+1}^i+(1-c)X_n^i-\bar X_n^*\big) ds \cdot(X_{n+1}^i-X_n^i)\bigg] \\
& \geq-\frac{\beta}{N}\sum_{i=1}^N  e^{-\beta L(X_n^i)}\Bigg[\int_0^1 \Big\|\nabla^2 L\big(s(cX_{n+1}^i+(1-c)X_n^i)+(1-s)\bar X_n^*\big)\Big\|_2ds \\
&\hspace{4cm} \times \big|cX_{n+1}^i+(1-c)X_n^i-\bar X_n^*\big||X_{n+1}^i-X_n^i|\Bigg] \\
&\geq-\frac{C_L\beta  e^{-\beta L_m}  }{N}\sum_{i=1}^N \big|cX_{n+1}^i+(1-c)X_n^i-\bar X_n^*\big||X_{n+1}^i-X_n^i|  \\
&=-\frac{C_L\beta  e^{-\beta L_m} }{N}\sum_{i=1}^N  \bigg| \sum_{l=1}^{d} (x^{i,l}_n - {\bar x}_n^{*,l}) (1-c\gamma-c\eta_n^l )e_l\bigg| \bigg| -\sum_{l=1}^{d} (x^{i,l}_n - {\bar x}_n^{*,l}) (\gamma +\eta_n^l )e_l\bigg| \\
&=-\frac{C_L\beta  e^{-\beta L_m} }{N}\sum_{i=1}^N  \sqrt{ \sum_{l=1}^{d} (x^{i,l}_n - {\bar x}_n^{*,l})^2 (1-c\gamma-c\eta_n^l )^2} \sqrt{ \sum_{l=1}^{d} (x^{i,l}_n - {\bar x}_n^{*,l})^2 (\gamma +\eta_n^l )^2}.
\end{aligned}
\end{align}
Set
\[ g(r):=(1-r\gamma-r\eta_n^l)^2, \quad 0\leq r\leq 1. \]
Then, we use the convexity of $g$ to find
\[
(1-c\gamma-c\eta_n^l)^2\leq \sup_{0\leq r\leq 1}g(r)=\max\{g(0),g(1)\}\leq g(0)+g(1)=1+(1-\gamma-\eta_n^l )^2.
\]
We substitute the above relation into \eqref{C-7} to obtain
\begin{align}
\begin{aligned} \label{C-8}
& \frac{1}{N}\sum_{i=1}^N e^{-\beta L(X_{n+1}^i)}-\frac{1}{N}\sum_{i=1}^N e^{-\beta L(X_n^i)}\\
& \hspace{0.2cm} \geq -\frac{C_L\beta  e^{-\beta L_m} }{N}\sum_{i=1}^N  \sqrt{ \sum_{l=1}^{d} (x^{i,l}_n - {\bar x}_n^{*,l})^2 \big(1+(1-\gamma-\eta_n^l )^2\big)} \sqrt{ \sum_{l=1}^{d} (x^{i,l}_n - {\bar x}_n^{*,l})^2 (\gamma +\eta_n^l )^2}.
\end{aligned}
\end{align}
Taking expectations on the both sides of \eqref{C-8} using the Cauchy-Schwarz inequality leads to
\begin{align}
\begin{aligned} \label{C-9}
\begin{split}
& \frac{1}{N}\sum_{i=1}^N\mathbb E e^{-\beta L(X_{n+1}^i)}-\frac{1}{N}\sum_{i=1}^N \mathbb Ee^{-\beta L(X_n^i)} \\
&\hspace{0.2cm} \geq -\frac{C_L\beta  e^{-\beta L_m} }{N}\sum_{i=1}^N  \mathbb E\left[\sqrt{ \sum_{l=1}^{d} (x^{i,l}_n - {\bar x}_n^{*,l})^2 \big(1+(1-\gamma-\eta_n^l )^2\big)} \sqrt{ \sum_{l=1}^{d} (x^{i,l}_n - {\bar x}_n^{*,l})^2 (\gamma+\eta_n^l )^2} \right]\\
&\hspace{0.2cm}\geq -\frac{C_L\beta  e^{-\beta L_m} }{N}\sum_{i=1}^N \sqrt{ \mathbb E\left[ \sum_{l=1}^{d} (x^{i,l}_n - {\bar x}_n^{*,l})^2 \big(1+(1-\gamma-\eta_n^l )^2\big)\right]\mathbb E \left[ \sum_{l=1}^{d} (x^{i,l}_n - {\bar x}_n^{*,l})^2 (\gamma+\eta_n^l )^2\right] }\\
&\hspace{0.2cm}=-\frac{C_L\beta  e^{-\beta L_m} }{N}\sum_{i=1}^N \sqrt{ \bigg( \sum_{l=1}^{d} \mathbb E|x^{i,l}_n - {\bar x}_n^{*,l}|^2\mathbb E \big[1+(1-\gamma-\eta_n^l )^2\big]\bigg)\bigg(\sum_{l=1}^{d} \mathbb E|x^{i,l}_n - {\bar x}_n^{*,l}|^2\mathbb E |\gamma +\eta_n^l |^2\bigg) }\\
&\hspace{0.2cm}=-\frac{C_L\beta  e^{-\beta L_m} }{N}\sum_{i=1}^N \sqrt{ \Big(\big(1+ (1-\gamma)^2+\zeta^2\big)\mathbb E|X^i_n - {\bar X}_n^*|^2\Big)\Big(\big( \gamma^2+\zeta^2\big)\mathbb E|X^i_n - {\bar X}_n^*|^2\Big) }\\
&\hspace{0.2cm}=-\frac{C_L \sqrt{ \big(1+ (1-\gamma)^2+\zeta^2\big)\big( \gamma^2+\zeta^2\big)}\beta  e^{-\beta L_m} }{N}\sum_{i=1}^N \mathbb E|X^i_n - {\bar X}_n^*|^2\\
&\hspace{0.2cm}\geq  -2C_L \sqrt{ \big(1+ (1-\gamma)^2+\zeta^2\big) \big( \gamma^2+\zeta^2\big)}e^{-(2\delta  - \gamma^2 -\zeta^2)n} \beta  e^{-\beta L_m}\sum_{l=1}^d \left(\mathbb E\max_{1\leq i\leq N} (x_0^{i,l} -{\bar x}^l_0)^2\right).
\end{split}
\end{aligned}
\end{align}
We sum up \eqref{C-9} over $n$ to get 
\begin{align*}
\begin{aligned} \label{C-10}
&\frac{1}{N}\sum_{i=1}^N \mathbb E e^{-\beta L(X_n^i)} \geq \frac{1}{N}\sum_{i=1}^N \mathbb E e^{-\beta L(X_0^i)} \\
& \hspace{1cm} -\frac{2C_L \sqrt{ \big(1+ (1-\gamma)^2+\zeta^2\big) \big( \gamma^2+\zeta^2\big)} \beta  e^{-\beta L_m}}{1-e^{-(2\gamma  - \gamma^2 -\zeta^2)}} \sum_{l=1}^d \left(\mathbb E\max_{1\leq i\leq N} (x_0^{i,l} -{\bar x}^l_0)^2\right).
\end{aligned}
\end{align*}
Letting $n\to\infty$, we use Lemma \ref{L3.2}  to find
\begin{align*}
\mathbb Ee^{-\beta L(X_\infty)} &\geq \frac{1}{N}\sum_{i=1}^N \mathbb E e^{-\beta L(X_0^i)} \\
&-\frac{2C_L \sqrt{ \big(1+ (1-\gamma)^2+\zeta^2\big) \big( \gamma^2+\zeta^2\big)} \beta  e^{-\beta L_m}}{1-e^{-(2\gamma  - \gamma^2 -\zeta^2)}} \sum_{l=1}^d \left(\mathbb E\max_{1\leq i\leq N} (x_0^{i,l} -{\bar x}^l_0)^2\right)\\
& \geq \varepsilon\mathbb E e^{-\beta L(X_{in})}.
\end{align*}
Hence
\[e^{-\beta \essinf\limits_{\omega\in\Omega} L(X_\infty)} =\mathbb Ee^{-\beta \essinf\limits_{\omega\in\Omega}L(X_\infty)}\geq\mathbb Ee^{-\beta L(X_\infty)}\geq  \varepsilon\mathbb E e^{-\beta L(X_{in})}. \]
By taking logarithm to the both sides of the above relation yields  the desired estimate \eqref{New-3}. \newline

\noindent Next, we apply Proposition \ref{P3.1} to obtain
\[
\essinf\limits_{\omega\in\Omega} L(X_\infty)\leq L(X_*)+ \frac{d}{2}\frac{\log\beta}{\beta}+E(\beta),
\]
for some function $E(\beta)= O\left(\frac{1}{\beta} \right)$. This completes the proof of Theorem \ref{T3.2}.
\section{Conclusion} \label{sec:4}
\setcounter{equation}{0}
In this paper, we presented a convergence study with an error estimate for a time-discrete consensus-based non-convex optimization algorithm, which includes three discrete-in-time algorithms used in \cite{C-J-L-Z}. For the continuous algorithm introduced in \cite{C-J-L-Z}, convergence and error analysis was studied  by the authors recently using the detailed structure of the algorithm and It\^o's stochastic analysis. However,  for the discrete algorithm such analysis was not done in the aforementioned work \cite{H-J-K}. The main reason for that  was the lack of discrete analog of the It\^o's stochastic analysis which has been crucially used in the study  of the continuous algorithm. Our error analysis employed in this work is based on the elementary and simple probability arguments together with the detailed structure of nonlinear interaction terms, and consequently  provides 
the stability condition, convergennce and an error analysis toward the global minumum for all three time-discrete algorithms implemented in \cite{C-J-L-Z}. Our analysis shows that these algorithms, under certain conditions on the parameters and with sufficiently well-chosen initial data,  do converge to
a point close to the global minimum for moderately high dimensional problems. 

\newpage

\appendix

\section{Error estimate under an alternative framework} \label{App-A}
\setcounter{equation}{0}
In this subsection, we provide an alternative result for an error estimate for time-discrete CBO scheme \eqref{A-1} without using Laplace's principle under a slightly different framework $({\mathcal B}1) - ({\mathcal B}2)$. Below, we present a framework for the objective function $L$, global minimum point $X_*$ and reference 
random variable $X_{in}$ as follows: \newline
\begin{itemize}
\item
$({\mathcal B}1)$:~Let $L = L(x)$ be a $C^2$-objective function  satisfying the following relations:
\begin{equation*} \label{C-5}
 L_m:=\min_{x \in \bbr^d} L(x) >0 \quad \mbox{and} \quad C_L:=\sup_{x\in\bbr^d}\|\nabla^2L(x)\|_2 <\infty,
\end{equation*} 
where $\|\cdot\|_2$ denotes the spectral norm. 

\vspace{0.2cm}
%
%

\item 
$({\mathcal B}2)$:~Let $X_{in}$ be a reference random variable associated with a law whose support ${\tilde D}$ is compact and contains $X_*$.\end{itemize}
Note that the condition $({\mathcal B}1)$ is exactly the same as $({\mathcal A}1)$, whereas the condition $({\mathcal B}2)$ is different from $({\mathcal A}3)$ in which the probability measures associated with $X_{in}$ is absolutely continuous with respect to Lebesgue measure. Moreover, notice that this new framework does not need any condition on $\operatorname{det}\left(\nabla^2 L(X_*)\right)$ as in $({\mathcal A}2)$. 

\vspace{0.2cm}

Next, we study how the common consensus state $X_\infty$ is close to the global minimum $X_*$ in suitable sense.
Now, we are ready to provide an error estimate for  the discrete CBO algorithm, which is analogous to Theorem 4.1 in \cite{H-J-K} for the continuous case. 
\begin{theorem} \label{TA.1}
Suppose that the framework $({\mathcal  B}1) - ({\mathcal  B}2)$ holds, and parameters $\beta, \gamma, \zeta, \delta$ and the initial data $\{X_0^i \}$ satisfy 
\begin{align}
\begin{aligned} \label{AP-1}
& \beta > 0, \quad \delta > 0, \quad  (\gamma-1)^2 + \zeta^2 < 1, \quad X_0^i: i,i.d, \quad X_0^i \sim X_{in},  \quad \sup_{x \in {\tilde {\mathcal D}}} L(x) - L_m  < \delta, \\
& (1-\varepsilon)\mathbb E \Big[ e^{-\beta L(X_{in})} \Big ] \\
& \hspace{0.5cm} \geq \frac{2C_L \sqrt{ \big(1+ (1-\gamma)^2+\zeta^2\big) \big( \gamma^2+\zeta^2\big)} \beta  e^{-\beta L_m}}{1-e^{-[ 1-(\gamma - 1)^2 - \zeta^2]}} \sum_{l=1}^d \left(\mathbb E\max_{1\leq i\leq N} (x_0^{i,l} -{\bar x}^l_0)^2\right),
\end{aligned}
\end{align}
for some $0<\varepsilon<1$. Then, for a solution $\{X_n^i\}_{1\leq i\leq N}$ to \eqref{A-1}, 
\begin{equation} \label{AP-2}
\Big| \essinf\limits_{\omega\in\Omega} L(X_\infty) - L_m \Big | \leq \delta + \Big| \frac{\log\varepsilon}{\beta} \Big|.
\end{equation}
\end{theorem}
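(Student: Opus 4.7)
The plan is to follow the same skeleton as the proof of Theorem \ref{T3.2}, but replace the final invocation of Laplace's principle (Proposition \ref{P3.1}) with a direct bound coming from the compact support assumption $(\mathcal{B}2)$. Since $(\mathcal{B}1)$ coincides with $(\mathcal{A}1)$ and the structural inequality in \eqref{AP-1} on $(1-\varepsilon)\mathbb{E}[e^{-\beta L(X_{in})}]$ is verbatim the one used to obtain \eqref{New-3}, the proof of Theorem \ref{T3.2} can be imported up to, but not including, the application of Laplace's principle.

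More precisely, I would first reuse the chain of estimates \eqref{C-6}--\eqref{C-9} and summation in $n$ to conclude
\[
\essinf_{\omega \in \Omega} L(X_\infty) \;\le\; -\frac{1}{\beta} \log \mathbb{E}\bigl[e^{-\beta L(X_{in})}\bigr] \;-\; \frac{1}{\beta} \log \varepsilon,
\]
which is exactly the key estimate \eqref{New-3}. The derivation uses only the consensus contraction $(\gamma-1)^2 + \zeta^2 < 1$, Lemma \ref{L3.2}, and the coercive inequality in $\eqref{AP-1}_2$, all of which are assumed in the statement of Theorem \ref{TA.1}.

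Next, instead of converting the right-hand side into a form involving $L(X_*)$ via Proposition \ref{P3.1}, I would exploit $(\mathcal{B}2)$ directly. Since $X_{in}$ is supported on the compact set $\tilde{D}$ and $\sup_{x \in \tilde{D}} L(x) - L_m < \delta$, we have $L(X_{in}) \le L_m + \delta$ almost surely, hence
\[
\mathbb{E}\bigl[e^{-\beta L(X_{in})}\bigr] \;\ge\; e^{-\beta (L_m + \delta)}, \qquad -\frac{1}{\beta} \log \mathbb{E}\bigl[e^{-\beta L(X_{in})}\bigr] \;\le\; L_m + \delta.
\]
Substituting this into the key estimate and noting that $0 < \varepsilon < 1$ implies $-\log \varepsilon = |\log \varepsilon|$, I obtain
\[
\essinf_{\omega \in \Omega} L(X_\infty) - L_m \;\le\; \delta + \frac{|\log \varepsilon|}{\beta}.
\]
The reverse inequality $\essinf_{\omega} L(X_\infty) \ge L_m$ is automatic from the definition of $L_m$, so combining the two yields \eqref{AP-2}.

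There is no serious obstacle; the only place requiring any care is verifying that every step of the proof of Theorem \ref{T3.2} up to \eqref{New-3} carries through under $(\mathcal{B}1)$ instead of $(\mathcal{A}1)$-$(\mathcal{A}3)$, but since the derivation of \eqref{New-3} only relies on the $C^2$ bound on $\nabla^2 L$, the positivity of $L_m$, the contraction condition, and the quantitative assumption on $\mathbb{E}[e^{-\beta L(X_{in})}]$, none of which invoke the absolute continuity of the law of $X_{in}$ or the nondegeneracy of $\nabla^2 L(X_*)$, the import is clean. Replacing the asymptotic Laplace expansion by the one-line compact-support bound is what makes the estimate non-asymptotic in $\beta$ and explains why no condition on $\det(\nabla^2 L(X_*))$ is needed here.
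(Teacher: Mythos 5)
Your proposal is correct and follows essentially the same route as the paper: the paper likewise imports the chain of estimates from Theorem \ref{T3.2} to obtain $\mathbb{E}e^{-\beta L(X_\infty)} \geq \varepsilon\,\mathbb{E}e^{-\beta L(X_{in})}$ (equivalently the key estimate \eqref{New-3}), then replaces Laplace's principle by the compact-support bound $\mathbb{E}e^{-\beta L(X_{in})} \geq e^{-\beta \sup_{x\in\tilde D}L(x)}$ and the hypothesis $\sup_{x\in\tilde D}L(x)-L_m<\delta$ to reach \eqref{AP-2}. Your observation that the derivation of \eqref{New-3} never uses $(\mathcal{A}2)$--$(\mathcal{A}3)$, together with the trivial lower bound $\essinf_\omega L(X_\infty)\geq L_m$, completes the argument exactly as intended.
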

\begin{remark}\label{R3.1}
Before we provide a proof, we give several comments on the result of this theorem.
1. In the proof of Theorem \ref{TA.1}, we will first derive the estimate:
\begin{equation} \label{AP-2-1}
\essinf\limits_{\omega\in\Omega} L(X_\infty)\leq \sup\limits_{x\in {\tilde D}} L(x)-\frac{1}{\beta}\log\varepsilon.
\end{equation}
2. For any given $\delta > 0,~\varepsilon > 0$ and $\beta > 0$, the conditions $\eqref{AP-1}$ can be attained with suitable $X_{in}$. To see this, we choose the law of $X_{in}$ such that ${\tilde D}$ is a small neighborhood of a global minimum $X_*$  satisfying the following two relations:
\begin{equation} \label{AP-3-1}
\sup\limits_{x\in {\tilde D}} L(x)-L_m<\delta,
\end{equation}
and
\begin{equation} \label{AP-4}
1-\varepsilon\geq \frac{2C_L \sqrt{ \big(1+ (1-\gamma)^2+\zeta^2\big) \big( \gamma^2+\zeta^2\big)} \beta  e^{\beta (\sup\limits_{x\in D}L(x)- L_m)}}{1-e^{-[ 1-(\gamma - 1)^2 - \zeta^2]}} \big( \operatorname{diam}({\mathcal R}_{in})\big)^2,
\end{equation}
 where ${\mathcal R}_{in} = [a_1, b_1] \times \cdots \times [a_d, b_d]$ is the  smallest closed $d$-dimensional rectangle containing ${\tilde D}$ so that 
\begin{equation} \label{AP-3}
\mathbb E \Big[ \max_{1\leq i\leq N} (x_0^{i,l} -{\bar x}^l_0)^2 \Big] \leq |b_\ell - a_\ell|^2,  
\end{equation}
for each $\ell = 1, \cdots, d$.  Then, due to \eqref{AP-3}, the relation \eqref{AP-4} implies the condition $\eqref{AP-1}_2$. Hence, we can apply the estimate \eqref{AP-2-1} and \eqref{AP-3-1} to get the desired error estimate:
\[
\essinf\limits_{\omega\in\Omega} L(X_\infty)\leq L_m+\left( \sup\limits_{x\in D} L(x)-L_m\right)-\frac{1}{\beta}\log\varepsilon<L_m+\delta-\frac{\log\varepsilon}{\beta}.
\]

\end{remark}

\vspace{0.5cm}
Now we are ready to provide a proof of Theorem \ref{TA.1}. By the same way as in the proof of Theorem \ref{T3.2}, we obtain
\begin{align*}
\mathbb Ee^{-\beta L(X_\infty)} &\geq \frac{1}{N}\sum_{i=1}^N \mathbb E e^{-\beta L(X_0^i)} \\
&-\frac{2C_L \sqrt{ \big(1+ (1-\gamma)^2+\zeta^2\big) \big( \gamma^2+\zeta^2\big)} \beta  e^{-\beta L_m}}{1-e^{-(2\gamma  - \gamma^2 -\zeta^2)}} \sum_{l=1}^d \left(\mathbb E\max_{1\leq i\leq N} (x_0^{i,l} -{\bar x}^l_0)^2\right)\\
& \geq \varepsilon\mathbb E e^{-\beta L(X_{in})}.
\end{align*}
Hence one has
\[e^{-\beta \essinf\limits_{\omega\in\Omega} L(X_\infty)} =\mathbb Ee^{-\beta \essinf\limits_{\omega\in\Omega}L(X_\infty)}\geq\mathbb Ee^{-\beta L(X_\infty)}\geq  \varepsilon\mathbb E e^{-\beta L(X_{in})}\geq\varepsilon e^{-\beta \sup\limits_{x\in D} L(x)}. \]
Finally, we take logarithm to the both sides of the above relation to get the desired estimate \eqref{AP-2-1}.


\begin{thebibliography}{00}

\bibitem{A-B} Acebron, J. A., Bonilla, L. L., P\'{e}rez Vicente, C. J. P., Ritort, F. and Spigler, R.: \textit{The Kuramoto model: A simple paradigm for synchronization phenomena.} Rev. Mod. Phys. \textbf{77} (2005), 137-185.


\bibitem{A-B-F} Albi, G., Bellomo, N., Fermo, L., Ha, S.-Y., Pareschi, L., Poyato, D. and Soler, J.: \textit{Vehicular traffic, crowds, and swarms. On the kinetic theory approach towards research perspectives.} Math. Models Methods Appl. Sci. {\bf 29} (2019), 1901-2005.






\bibitem{Be} Bertsekas, D.: \textit{Convex analysis and optimization.} Athena Scientific. 2003.


\bibitem{C-J-L-Z} Carrillo, J. A., Jin, S., Li, L. and Zhu, Y.: \textit{A consensus-based global optimization method for high dimensional machine learning problems.} Submitted.

\bibitem{C-C-T-T} Carrillo, J., Choi, Y.-P., Totzeck, C. and Tse, O.: \textit{An analytical framework for consensus-based global optimization method.} Mathematical Models and Methods in Applied Sciences {\bf 28} (2018), 1037-1066.




\bibitem{C-H-L} Choi, Y.-P., Ha, S.-Y. and Li, Z.: \textit{Emergent dynamics of the Cucker-Smale flocking model and its variants.} In N. Bellomo, P. Degond, and E. Tadmor (Eds.), Active Particles Vol.I Theory, Models, Applications (tentative title), Series: Modeling and Simulation in Science and Technology, Birkhauser, Springer. 2017.


\bibitem{C-S2} Crow, E. L. and Shimizu, K., eds.: \textit{Lognormal distributions: theory and applications.} Statistics: Textbooks and Monographs, {\bf 88}, New York: Marcel-Dekker, Inc., 1988.

\bibitem{C-S} Cucker, F. and Smale, S.: \textit{On the mathematics of emergence.} Japanese Journal of Mathematics {\bf 2} (2007), 197-227.

\bibitem{D-Z} Dembo, A. and Zeitouni, O.: \textit{Large deviations techniques and applications.} Springer-Verlag, Berlin, Heidelberg, second edition, 1998.


\bibitem{E-K} Eberhart, R. and Kennedy, J.: \textit{Particle swarm optimization.} Proceedings of the IEEE International Conference on Neural Networks {\bf 4} (1995), 1942-1948.

\bibitem{F-H-J} Fang, D., Ha, S.-Y. and Jin, S.: \textit{Emergent behaviors of the Cucker-Smale ensemble under attractive-repulsive couplings and Rayleigh frictions.} Math. Models Methods Appl. Sci. {\bf 29} (2019), 1349-1385.

\bibitem{F-H-P-S1} Fornasier, M., Huang, H., Pareschi, L. and S$\ddot{u}$nnen, P.: \textit{Consensus-based optimization on the sphere I: Well-posedness and mean-field limit.} Preprint. Available at arXiv:2001.110994v2.

\bibitem{F-H-P-S2} Fornasier, M., Huang, H., Pareschi, L. and S$\ddot{u}$nnen, P.: \textit{Consensus-based optimization on the sphere II: Convergence to global minimizer and machine learning.} Preprint. Available at arXiv:2001.11988v2.

\bibitem{H-J-K} Ha, S.-Y., Jin, S. and Kim, D.: \textit{Convergence of a first-order consensus-based global optimization algorithm}. Submitted. arxiv: 1910.08239.

\bibitem{H-L} Ha, S.-Y. and Liu, J.-G.: \textit{A simple proof of Cucker-Smale flocking dynamics and mean-field limit.} Commun. Math. Sci. {\bf 7} (2009), 297-325. 

\bibitem{H-L-L}  Ha, S.-Y., Lee, K. and Levy, D.: \textit{Emergence of time-asymptotic flocking in a stochastic Cucker-Smale system.} Commun. Math. Sci. {\bf7} (2009), 453-469. 


\bibitem{Ho} Holland, J. H.: \textit{Genetic algorithms.} Scientific American {\bf 267} (1992), 66-73.

\bibitem{Hsu} Hsu, L. C.: \textit{A theorem on the asymptotic behavior of a multiple integral.} Duke Math. J. {\bf 15} (1948), 623-632.


\bibitem{Ke} Kennedy, J.: \textit{Swarm intelligence}. Handbook of nature-inspired and innovative computing. Springer 2006, 187-219.

\bibitem{K-G-V} Kirkpatrick, S., Gelatt, C. D. and Vecchi, M. P.: \textit{Optimization by simulated annealing.} Science {\bf 220} (1983), 671-680.

\bibitem{K-C-B-F-L} Kolokolnikov, T., Carrillo, J. A., Bertozzi, A., Fetecau, R. and Lewis, M.: \textit{Emergent behavior in a multi-particle systems with non-local interactions.} Physica D {\bf 260} (2013), 1-4.

\bibitem{Ku1} Kuramoto, Y.: \textit{Chemical oscillations, waves and turbulence.} Springer-Verlag, Berlin, 1984.

\bibitem{Ku2} Kuramoto, Y.: \textit{International symposium on mathematical problems in mathematical physics.} Lecture Notes Theor. Phys.  \textbf{30} (1975), 420.

\bibitem{L-A} Laarhoven, P. J. M. van and Aarts, E. H. L.: \textit{Simulated annealing: theory and applications.} D. Reidel Publishing Co., Dordrecht, 1987.

\bibitem{M-T} Motsch, S. and Tadmor, E.: \textit{Heterophilious dynamics enhances consensus.} SIAM. Rev. {\bf 56} (2014), 577-621.

\bibitem{Pe} Peskin, C. S.: \textit{Mathematical aspects of heart physiology.} Courant Institute of Mathematical Sciences, New York, 1975.

\bibitem{P-T-T-M} Pinnau, R., Totzeck, C., Tse, O. and Martin, S.: \textit{A consensus-based model for global optimization and its mean-field limit}. Math. Models Methods Appl. Sci. {\bf 27} (2017), 183-204.

\bibitem{P-R-K}  Pikovsky, A., Rosenblum, M. and Kurths, J.:  \textit{Synchronization: A universal concept in nonlinear sciences.} Cambridge University Press, Cambridge, 2001.


\bibitem{T-P-B-S} Totzeck, C. Pinnau, R., Blauth, S. and Schotth\'{o}fer, S.: \textit{A numerical comparison of consensus-based global optimization to other particle-based global optimization scheme.} Proceedings in Applied Mathematics and Mechanics, {\bf 18}, 2018.

\bibitem{V-Z} Vicsek, T. and Zefeiris, A.: \textit{Collective motion.} Phys. Rep. {\bf 517} (2012), 71-140.

\bibitem{Ya} Yang, X.-S.: \textit{Nature-inspired metaheuristic algorithms.} Luniver Press, 2010.
\bibitem{Y-D} Yang, X.-S., Deb, S., Zhao, Y.-X., Fong, S. and He, X.: \textit{Swarm intelligence: past, present and future}. Soft Comput {\bf 22} (2018), 5923-5933. 

\end{thebibliography}
\end{document}